\def\url@leostyle{%
 \@ifundefined{selectfont}{\def\UrlFont{\sf}}{\def\UrlFont{\scriptsize\ttfamily}}} \makeatother\urlstyle{leo}
\newtheorem{theorem}{Theorem}
\newtheorem{proposition}[theorem]{Proposition}
\newtheorem{lemma}[theorem]{Lemma}
\theoremstyle{definition}
\theoremstyle{remark}
\newtheorem{remark}[theorem]{Remark}
\numberwithin{equation}{section}
\numberwithin{theorem}{section}
\definecolor{Red}{rgb}{0.9,0,0.0}
\definecolor{Blue}{rgb}{0,0.0,1.0}
\def\cH{\mathcal{H}}
\def\cI{\mathcal{I}}
\def\cN{\mathcal{N}}
\def\bE{\mathbb{E}}
\def\bI{\mathbb{I}}
\def\bN{\mathbb{N}}
\def\bP{\mathbb{P}}
\def\bR{\mathbb{R}}
\def\bV{\mathbb{V}}
\def\sF{\mathscr{F}}
\newcommand{\wh}{\widehat}
\newcommand{\1}{\mathbbm{1}}            
\newcommand{\set}[1]{\{#1\}}            
\newcommand{\Laplace}{\boldsymbol{\Delta}}       
\DeclareMathOperator*{\w}{w-\!\!} 
\DeclareMathOperator{\dif}{d \!}        
\DeclareMathOperator{\Var}{\bV\mathrm{ar}}          
\newcommand{\ch}{\mathcal{H}}
\newcommand{\RR}{\mathbb R}
\newcommand{\E}{\mathbb E}
\newcommand{\cf}{\mathcal F}
\newcommand{\cn}{\mathcal N}
\newcommand{\ga}{\gamma}
\newcommand{\si}{\sigma}
\title{Drift estimation for discretely sampled SPDEs}
\date{ {\small  
First Circulated: \today }}
\begin{document}
\author{
    Igor Cialenco\,\thanks{Department of Applied Mathematics, Illinois Institute of Technology
       \newline \hspace*{1.45em}  10 W 32nd Str, Building REC, Room 208, Chicago, IL 60616, USA
       \newline \hspace*{1.45em}  Email: \url{cialenco@iit.edu},  URL: \url{http://math.iit.edu/\~igor}
        \vspace{0.5em}}
\and
 Francisco Delgado-Vences\,\thanks{ Instituto de Matem\'aticas, UNAM, Oaxaca de Juarez, M\'exico 61000
         \newline \hspace*{1.45em}  Email: \url{delgado@im.unam.mx}
         \vspace{0.5em}}
\and
        Hyun-Jung Kim\,\thanks{ Department of Applied Mathematics, Illinois Institute of Technology
       \newline \hspace*{1.45em}  10 W 32nd Str, Building REC, Room 208, Chicago, IL 60616, USA
         \newline \hspace*{1.45em} Email: \url{hkim129@iit.edu}, URL: \url{https://sites.google.com/view/hyun-jungkim}
         }
        }

\maketitle

\vspace{-2em}

\smallskip

{\footnotesize
\begin{tabular}{l@{} p{350pt}}
  \hline \\[-.2em]
  \textsc{Abstract}: \ &  The aim of this paper is to study the asymptotic properties of the maximum likelihood estimator (MLE) of the drift coefficient for fractional stochastic heat equation  driven by an additive space-time noise. We consider the traditional for stochastic partial differential equations  statistical experiment when the measurements are performed in the spectral domain, and in contrast to the existing literature, we study the asymptotic properties of the maximum likelihood (type) estimators (MLE) when both, the number of Fourier modes and the time go to infinity. In the first part of the paper we consider the usual setup of continuous time observations of the Fourier coefficients of the solutions, and show that the MLE is consistent, asymptotically normal and optimal in the mean-square sense. In the second part of the paper  we investigate  the natural time discretization of the MLE, by assuming that the first $N$ Fourier modes are measured at $M$ time grid points, uniformly spaced over the time interval $[0,T]$. We provide a rigorous asymptotic analysis of the proposed estimators when $N\to\infty$ and/or $T,M\to\infty$. We establish sufficient conditions  on the growth rates of $N,M$ and $T$, that guarantee consistency and asymptotic normality of these estimators.

 \\[0.5em]
\textsc{Keywords:} \ & fractional stochastic heat equation, parabolic SPDE, stochastic evolution equations, statistical inference for SPDEs, drift estimation, discrete sampling, high-frequency sampling.   \\
\textsc{MSC2010:} \ & 60H15, 65L09, 62M99 \\[1em]
  \hline
\end{tabular}
}

\section{Introduction} Undoubtedly, the stochastic partial differential equations (SPDEs) serve as a modern powerful modeling tool in describing the evolution of dynamical systems in the presence of spatial-temporal uncertainties with particular applications in fluid mechanics, oceanography, temperature anomalies, finance, economics, biological and ecological systems, and many other applied disciplines. Major breakthrough results have been established on the general analytical theory for SPDEs, such as existence, uniqueness and regularity properties of the solutions.  For an in depth discussion of the theory of SPDEs and their various applications, we refer to recent monographs~\cite{LototskyRozovsky2017Book,RozovskyRozovsky2018Book}. In contrast, the investigation of inverse problems for SPDEs, and in particular parameter estimation problems, are  still in their emerging phase. We refer to the survey papers \cite{Lototsky2009Survey,Cialenco2018} and the monograph \cite[Chapter~6]{LototskyRozovsky2017Book} for an overview of the literature and existing methodologies on statistical inference for parabolic SPDEs. Most of the existing results are obtained within the so-called spectral approach, when it is assumed that the observer measures the values of one realization of the first $N$ Fourier modes of the solution continuously over a finite time interval $[0,T]$. In such cases, usually the statistical problems are addressed via maximum likelihood estimators (MLEs), and the asymptotic properties of the estimators are studied in the large number of Fourier modes regime, $N\rightarrow\infty$, while time horizon $T$ is fixed. The large time asymptotics regime $T\to\infty$, while $N$ being fixed, usually falls in the realm of finite dimensional stochastic differential equations, which is a well-established research area. This asymptotic regime in the context of SPDEs was only briefly discussed in~\cite{LototskyRozovsky2017Book,CXu2015}. Only several works have been dedicated to parameter estimation problems for SPDEs in discrete  sampling setup.  In~\cite{PiterbargRozovskii1997,PrakasaRao2002,PrakasaRao2003,Markussen2003}, the authors investigate some version of the discretized MLEs for some particular equations. In  \cite{PospivsilTribe2007}, and more recently in \cite{CialencoHuang2017,BibingerTrabs2017,Chong2019,BibingerTrabs2019}, using various approaches the authors study the estimation of the drift and/or volatility coefficients when the solution is sampled discretely in physical domain.

\smallskip\noindent
\textbf{The aim of this work} is to provide a rigorous and comprehensive asymptotic analysis of the time discretized MLE for the drift coefficient of a fractional heat equation driven by an additive space-time noise. The precise form of the considered equations and their well posedness are presented in Section~\ref{sec:Setup}. The main results of this work can be summarized as follows:
 \begin{enumerate}[$\rhd$]
   \item In Section~\ref{sec:contTime} we assume the same sampling scheme as in the existing literature on spectral approach, namely continuous time observations of the $N$ Fourier modes for $t\in[0,T]$, and study the asymptotic properties of the MLE, when \textit{both}, $N,T\to\infty$. We prove that this estimator is (strongly) consistent, and asymptotically normal. We give two proofs of the asymptotic normality, one based on Malliavin calculus, which we believe can be used, with slight modifications to other similar problems. Another proof uses classical results from general probability and explodes the particular structure and properties of the underlying problem. In particular, we show that the estimator is optimal in the mean-square sense.
   \item In Section~\ref{sec:Discrete} we consider the \textit{natural time discretization of the MLE}, by assuming that the first $N$ Fourier modes are measured at $M$ time grid points, uniformly spaced over the time interval $[0,T]$. We study the asymptotic properties of the proposed estimator when $N,T,M\to\infty$. In particular, we prove that the estimator is consistent if $N,M,T\to \infty$, or $N,M\to \infty$ while $T$ is fixed, and if
$T^2N^{\frac{4\beta}{d}-1}/M^2\rightarrow 0$, where $d$ is the space dimension, and $\beta$ is the power of the Laplacian. Moreover, if $4\beta<d$, then consistency holds true, when $N\to\infty$, when $M,T$ are fixed. This, in particular implies that to estimate efficiently the drift parameter it is enough to observe the Fourier modes at one instant of time - a result that agrees with recent discoveries in \cite{CialencoHuang2017,BibingerTrabs2017} where the solution is sampled in physical domain.  Under some additional technical assumptions on the growth rates of $N,M$ and $T$, we also prove that the proposed estimator is also asymptotically normal, with the same rate of convergence $\sqrt{T}N^{\frac{\beta}{d}+\frac{1}{2}}$ as the MLE from continuous time observation setup.
 \end{enumerate}
Some technical proofs, auxiliary results and relevant elements of Malliavin calculus are deferred to Appendix.

\smallskip\noindent
\textbf{Open problems and future work.}  A reasonable extension of the present work is to investigate similar estimators and problems given that the solution is observed discretely in the physical domain, in which case, one has to additionally approximate the Fourier modes by a sum. While analogous asymptotic properties are expected to hold true, rigourous proofs remain to be established. As already mentioned, most of the existing literature on parameter estimation for SPDEs is focused on sampling the Fourier modes in continuous time. In particular, the MLE approach was successfully applied to nonlinear equations \cite{IgorNathanAditiveNS2010,PasemannStannat2019}, and to equations driven by a fractional noise \cite{IgorSergeyJan2008}. Besides MLEs, in \cite{CialencoGongHuang2016} the authors propose an alternative class of estimators, called trajectory fitting estimators, and a Bayesian approach to estimating drift coefficients for a class of SPDEs driven by multiplicative noise is considered in \cite{ZCG2018}. It is imperative, from theoretical and practical point of view, to study the asymptotic properties of the discretized versions of the estimators proposed in the above mentioned works, especially by looking at various asymptotic regimes (large time-space sampling, small mesh size, etc).

\section{Setup of the problem and some auxiliary results}\label{sec:Setup}
Let $(\Omega,\sF,\set{\sF_t}_{t\geq 0}, \bP)$ be a stochastic basis with usual assumptions, and let $\set{w_j,\ j\geq 1}$ be a collection of independent standard Brownian motions on this basis. Assume that $G$ is a bounded and smooth domain in $\bR^d$, and let us denote by $\Laplace$ the Laplace operator on $G$ with zero boundary conditions. The corresponding scale of Sobolev spaces will be denoted by $H^s(G)$, or simply $H^s$, for $s\in\bR$.
It is well known (cf. \cite{Shubin}) that: a) the set $\set{h_k}_{k\in\bN}$ of eigenfunctions of $\Laplace$ forms a complete orthonormal system in $L^2(G)$; b) the corresponding eigenvalues $\nu_k,k\in\bN$, can be arranged such that $0<-\nu_1\leq - \nu_2\leq \ldots$, and there exists a positive constant $\varpi$ so that
\begin{equation*}
  \lim_{k\to\infty} |\nu_k|k^{-2/d} = \varpi.
\end{equation*}
In what follows, we will use the notation $\lambda_k:=\sqrt{-\nu_k}, \ k\in\bN$, and $\Lambda=\sqrt{-\Laplace}$. Also, for two sequences of numbers $\{a_n\}$ and $\{b_n\}$, we will write $a_n \sim b_n$, if there exists a nonzero and finite number $c$ such that $\lim_{n\to\infty}a_n/b_n=c$, and $a_n\simeq b_n$, if $\lim_{n\to\infty}a_n/b_n=1$.

We consider the following stochastic PDE
\begin{equation}\label{eq:mainSPDE}
\dif U(t,x) + \theta (-\Laplace)^\beta U(t,x)\dif t = \sigma \sum_{k\in\bN} \lambda_k^{-\gamma}h_k(x)\dif w_k(t), \quad t\in[0,T], \ U(0,x) = U_0, \ x\in G,
\end{equation}
where $\theta>0$, $\beta>0, \ \gamma \geq 0$, $\sigma>0$, and $U_0\in H^s(G)$ for some $s\in\bR$.

Using standard arguments (cf. \cite{ChowBook,LototskyRozovsky2017Book,RozovskyRozovsky2018Book}), it can be proved that if $2(\gamma-s)/d > 1$, then \eqref{eq:mainSPDE} has a unique  solution $U$, weak in PDE sense and strong in probability sense, such that
\begin{equation*}
U\in L^2(\Omega\times[0,T]; H^{s+\beta})\cap L^{2}(\Omega; C((0,T); H^s)).
\end{equation*}
In what follows, we will assume that $s\geq 0$, and $2\gamma > d$. We denote by $u_k,k\in\bN,$ the Fourier coefficient of the solution $U$ of \eqref{eq:mainSPDE} with respect to $h_k,k\in\bN$, i.e. $u_k(t) = (U(t),h_k)_0, k\in\bN$. Let $H^N$ be the finite dimensional subspace of $L^2(G)$ generated by $\set{h_k}_{k=1}^N$, and denote by $P_N$ the projection operator of $L^2(G)$ into $H^N$, and put $U^N = P_NU$, or equivalently $U^N:=(u_1,\ldots,u_N)$. Clearly, the Fourier mode $u_k,k\in\bN$, follows the dynamics of an Ornstein-Uhlenbeck process given by
\begin{equation*}
\dif u_k = -\theta \lambda_k^{2\beta} u_k\dif t + \sigma \lambda_k^{-\gamma} \dif w_k(t), \quad  u_k(0) = (U_0,h_k), \ t\geq 0.
\end{equation*}
We denote by $\bP^{N,T}_{\theta}$ the probability measure on $C([0,T]; H^N)\backsimeq C([0,T]; \bR^N)$ generated by the $U^N$.
The measures $\bP^{N,T}_{\theta}$ are equivalent for different values of the parameter $\theta$, and the Radon-Nikodym derivative, or likelihood ratio, has the form
\begin{align*}
\frac{\bP^{N,T}_{\theta}}{\bP^{N,T}_{\theta_{0}}} (U^N) =
\exp\left(-\frac{\left(\theta-\theta_0\right)}{\sigma^{2}} \sum_{k=1}^N\lambda_k^{2\beta+2\gamma}\int_0^T  u_k(t)\dif u_k(t)
- \frac{\left(\theta^2-\theta_0^2\right)}{2\sigma^{2}}\sum_{k=1}^{N}\lambda_k^{4\beta+2\gamma}\int_0^Tu_k^2(t)\dif t\right).
\end{align*}
By maximizing the log likelihood ratio with the respect  to the parameter of interest $\theta$,  we obtain the Maximum Likelihood Estimator (MLE) for $\theta$ given by
\begin{equation}\label{eq:MLE-UN}
\widehat{\theta}_{N,T} := -\frac{\sum_{k=1}^{N}\lambda_k^{2\beta+2\gamma}\int_0^T u_k(t)\dif u_k(t)}{\sum_{k=1}^{N}\lambda_k^{4\beta+2\gamma}\int_0^T u_k^2(t)\dif t},
\quad N\in\bN, \ T>0.
\end{equation}
Let us also compute the Fisher information related to $\bP^{N,T}_{\theta}/\bP^{N,T}_{\theta_{0}}$.
For simplicity, set $U_0=0$. Namely,
\begin{align*}
  \cI_{N,T} & := \int \left|  \frac{\partial}{\partial \theta} \log \frac{\dif \bP^{N,T}_{\theta}}{\dif \bP^{N,T}_{\theta_0}} \right|^2
  \left(\frac{\dif \bP^{N,T}_{\theta_0}}{\dif \bP^{N,T}_{\theta}}\right)^{-1} \dif \bP^{N,T}_{\theta_0} \nonumber\\
  & = - \int  \frac{\partial^2}{\partial \theta^2} \log \frac{\dif \bP^{N,T}_{\theta}}{\dif \bP^{N,T}_{\theta_0}}
  \left(\frac{\dif \bP^{N,T}_{\theta_0}}{\dif \bP^{N,T}_{\theta}}\right)^{-1} \dif \bP^{N,T}_{\theta_0}  \nonumber\\
  & = \frac{1}{\sigma^2} \sum_{k=1}^{N} \lambda_k^{4\beta +2\gamma} \bE\left[ \int_0^T u_k^2 \dif t \right].
\end{align*}
By direct evaluations, we have that
$$
\bE\left[ \int_0^T u_k^2 \dif t \right] = \frac{\sigma^2\lambda_k^{-2\gamma-2\beta}}{2\theta_0}\left(T-\frac{1-e^{-2\theta_0 \lambda_k^{2\beta}T}}{2\theta_0\lambda_k^{2\beta}}\right),
$$
which yields
\begin{align}
\cI_{N,T} & = \frac{1}{2\theta_0}\sum_{k=1}^{N}\lambda_k^{2\beta} \left(T-\frac{1-e^{-2\theta_0 \lambda_k^{2\beta}T}}{2\theta_0\lambda_k^{2\beta}}\right)
\simeq \frac{T}{2\theta_0}\sum_{k=1}^N \lambda_k^{2\beta},\quad \mbox{as}\ T\to \infty
\nonumber\\
& \simeq \frac{\varpi^{\beta}dTN^{\frac{2\beta}{d}+1}}{\left(4\beta+2d\right)\theta_0},\quad \mbox{as}\ N,T\to \infty .\label{eq:FisherInfo2}
\end{align}
In particular, note that $\cI_{N,T}\to\infty$, when $N,T\to\infty$.

\section{Asymptotics in large time and large number of Fourier modes}\label{sec:contTime}
It is known that the estimator $\wh \theta_{N,T}$ is unbiased, strongly consistent and asymptotically normal in two asymptotic regimes: $N\to\infty$ and $T$ fixed, and $T\to\infty$ and $N$ fixed; see for instance \cite{CXu2015,Cialenco2018} and references therein. In particular, for every fixed $T>0$,  $\lim_{N\to\infty} \theta_{N,T} = \theta_0$, with probability one, and
\begin{align}
\w\lim_{N\to\infty} N^{\beta/d+\frac{1}{2}} \left( \widehat{\theta}_{N,T} -\theta_0\right) &= \cN\left(0,\frac{(4\beta/d+2)\theta_0}{\varpi^{\beta}T}\right), \label{eq:AsymNormN}
\end{align}
where $\w\,\lim$ denotes the limit in distribution\footnote{Whenever convenient, we will also use the notation `$\overset{d}{\longrightarrow}$' to denote the convergence in distribution of random variables.}, and   $\cN(0,\bar{\sigma}^2)$ is a Gaussian random variable\footnote{Throughout the text we will use the notation $\cN(\mu_0,\sigma_0^2)$ to denote a Gaussian random variable with mean $\mu_0$ and variance $\sigma_0^2$.} with mean zero and variance $\bar\sigma^2$. Similarly, for every fixed $N\in\bN$, $\lim_{T\to\infty} \theta_{N,T} = \theta_0$, with probability one, and
\begin{equation}
 \w \lim_{T\to\infty} \sqrt{T} \left( \widehat{\theta}_{N,T} -\theta_0\right) = \cN(0,2\theta_0/J), \label{eq:AsymNormT}
\end{equation}
where $J=\sum_{k=1}^N\lambda_k^{2\beta}$.

To the best of our knowledge, the asymptotic properties of $\wh \theta_{N,T}$ when (both) $T,N\to\infty$ is not studied in the current literature. Besides this being an important question alone, the obtained results in this section will also serve as theoretical basis for investigating the statistical properties of the discretized version of the MLE studied later in this paper. In view of the above, naturally one should expect that the joint time-space consistency $\lim_{N,T\to\infty}\theta_{N,T} = \theta_0$ is satisfied. On the other hand, by \eqref{eq:AsymNormN}
\begin{align}
&\w\lim_{T\to\infty}\lim_{N\to\infty}\sqrt{T}N^{\beta/d+\frac{1}{2}}(\wh \theta_{N,T} - \theta_0)
=\cN\left(0,\frac{(4\beta/d+2)\theta_0}{\varpi^{\beta}}\right), \label{eq:Normality-TN}
\end{align}
and by \eqref{eq:AsymNormT} same result holds for the swaped limiting order $T\to\infty, N\to\infty$. While \eqref{eq:Normality-TN} does not have great statistical meaning, it leads to a reasonable ansatz that same identity \eqref{eq:Normality-TN} should be satisfied when both $N,T\to\infty$. Also note that, in view of  \eqref{eq:FisherInfo2}, this estimator is also optimal in the mean-square sense, having the rate of convergence dictated by the Fisher information. Next we give a rigourous proof of these results.

\begin{theorem}\label{th:NT1}
Assume that $\beta>1/2$ and $\gamma > d/2$. Then, $\wh \theta_{N,T}$ is strongly consistent, i.e.
\begin{equation}
\lim_{N,T \to\infty} \wh \theta_{N,T} = \theta_0,   \ \  \textrm{with probability one}, \label{eq:ConsNT}
\end{equation}
and asymptotically normal, i.e.
\begin{equation}
\w\lim_{N,T\rightarrow \infty} \sqrt{T}N^{\frac{\beta}{d}+\frac{1}{2}} \left(\widehat{\theta}_{N,T} -\theta_0\right)= \cN\left(0,\frac{(4\beta/d+2)\theta_0}{\varpi^{\beta}}\right). \label{eq:NTAsymNormal1}
\end{equation}
\end{theorem}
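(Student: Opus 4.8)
The plan is to substitute the true dynamics of the Fourier modes into \eqref{eq:MLE-UN} and exploit the martingale structure that emerges. Inserting $\dif u_k=-\theta_0\lambda_k^{2\beta}u_k\,\dif t+\sigma\lambda_k^{-\gamma}\dif w_k$ and cancelling the common quadratic term gives
\begin{equation*}
\wh\theta_{N,T}-\theta_0=-\frac{\xi_{N,T}}{\eta_{N,T}},\qquad \xi_{N,T}:=\sigma\sum_{k=1}^N\lambda_k^{2\beta+\gamma}\int_0^T u_k\,\dif w_k,\quad \eta_{N,T}:=\sum_{k=1}^N\lambda_k^{4\beta+2\gamma}\int_0^T u_k^2\,\dif t.
\end{equation*}
For each fixed $N$ the map $T\mapsto\xi_{N,T}$ is a continuous martingale with bracket $\langle\xi\rangle_{N,T}=\sigma^2\eta_{N,T}$, and from \eqref{eq:FisherInfo2} its natural normalizer is $b_{N,T}:=\bE[\eta_{N,T}]=\sigma^2\cI_{N,T}\simeq\sigma^2\varpi^\beta dTN^{2\beta/d+1}/((4\beta+2d)\theta_0)$. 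Applying It\^o's formula to $u_k^2$ and solving for the time integral produces the identity
\begin{equation*}
\eta_{N,T}=\frac{\sigma^2T}{2\theta_0}\sum_{k=1}^N\lambda_k^{2\beta}+\frac{1}{2\theta_0}\sum_{k=1}^N\lambda_k^{2\beta+2\gamma}\big(u_k^2(0)-u_k^2(T)\big)+\frac{1}{\theta_0}\xi_{N,T},
\end{equation*}
which cleanly splits $\eta_{N,T}$ into a deterministic leading term $D_{N,T}\simeq b_{N,T}$, a lower-order boundary remainder $R_{N,T}$, and the same martingale $\xi_{N,T}$ up to the factor $1/\theta_0$. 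Both assertions then reduce to showing $\eta_{N,T}/b_{N,T}\to1$ and $\xi_{N,T}/b_{N,T}\to0$.

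For the strong consistency \eqref{eq:ConsNT} I would establish these two limits almost surely; by the displayed identity it suffices to control $\xi_{N,T}$ and the boundary term $R_{N,T}$ relative to $D_{N,T}$. Since $\bE[\xi_{N,T}^2]=\sigma^2 b_{N,T}\propto TN^{2\beta/d+1}$, we have $\bE[(\xi_{N,T}/b_{N,T})^2]\propto 1/(TN^{2\beta/d+1})$. The genuinely two-parameter almost-sure statement is the main obstacle: the factor $N^{-(2\beta/d+1)}$ is summable in $N$ (this is where $\beta>0$ enters), but $1/T$ is not summable, so a crude Borel--Cantelli over an $(N,T)$ grid fails in the time direction. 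I would instead block the time axis dyadically, bound $\bE[\sup_{T\in[2^j,2^{j+1}]}\xi_{N,T}^2]$ by Doob's maximal inequality (legitimate because $\xi_{N,\cdot}$ is a martingale in $T$), note that $D_{N,T}\asymp D_{N,2^j}$ on each block since $D$ is exactly linear in $T$, and then apply Borel--Cantelli over the grid of pairs $(N,j)$, whose associated series is summable in both indices. The boundary remainder is handled analogously but more easily: using the sub-exponential (chi-squared-type) tails of the Gaussian variables $u_k^2(T)$ together with $\lambda_k^{2\beta+2\gamma}\bE[u_k^2(T)]\asymp1$, one gets $R_{N,T}/D_{N,T}\to0$ almost surely. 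Together these yield $\eta_{N,T}/D_{N,T}\to1$ and $\xi_{N,T}/D_{N,T}\to0$, hence \eqref{eq:ConsNT}.

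For the asymptotic normality \eqref{eq:NTAsymNormal1} I would write
\begin{equation*}
\sqrt{\cI_{N,T}}\,(\wh\theta_{N,T}-\theta_0)=-\frac{1}{\sigma}\cdot\frac{\xi_{N,T}/\sqrt{b_{N,T}}}{\eta_{N,T}/b_{N,T}}.
\end{equation*}
The denominator tends to $1$ in probability (a weaker form of the consistency step, obtained directly from the variance estimate $\mathrm{Var}(\eta_{N,T})/b_{N,T}^2\propto1/b_{N,T}\to0$), so by Slutsky's theorem it remains to prove the limit $\xi_{N,T}/\sqrt{b_{N,T}}\overset{d}{\longrightarrow}\cN(0,\sigma^2)$. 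I would give the two arguments promised in the introduction. Classically, for any sequence $(N_n,T_n)\to(\infty,\infty)$ one views $u\mapsto\xi_{N_n,uT_n}/\sqrt{b_{N_n,T_n}}$ as a continuous martingale whose terminal bracket equals $\sigma^2\eta_{N_n,T_n}/b_{N_n,T_n}\to\sigma^2$ in probability; by the martingale central limit theorem, equivalently via the Dambis--Dubins--Schwarz time change, convergence of the bracket forces convergence of the terminal value to $\cN(0,\sigma^2)$. Alternatively, since each $\int_0^T u_k\,\dif w_k$ is a double Wiener--It\^o integral, $\xi_{N,T}$ lives in the second Wiener chaos, so the fourth-moment theorem applies: by independence across modes the fourth cumulant is additive, and a contraction computation gives $\kappa_4(\xi_{N,T})\propto T\sum_{k}\lambda_k^{2\beta}\propto b_{N,T}$, whence $\kappa_4(\xi_{N,T})/b_{N,T}^2\to0$ and the Gaussian limit follows. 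Finally, since $\sqrt{\cI_{N,T}}\simeq\sqrt{\varpi^\beta d/((4\beta+2d)\theta_0)}\,\sqrt{T}N^{\beta/d+1/2}$, rescaling the standard normal limit by this constant reproduces exactly the variance $(4\beta/d+2)\theta_0/\varpi^\beta$ in \eqref{eq:NTAsymNormal1}.

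The crux of the whole argument is the two-parameter almost-sure convergence in the consistency step: one must convert the continuous-time martingale fluctuations in $T$, where second moments alone are not summable, into a Borel--Cantelli--summable statement via dyadic blocking and a maximal inequality, while simultaneously using the fast polynomial growth in $N$ to sum over the modes. The standing hypotheses $\beta>1/2$ and $\gamma>d/2$ guarantee the well-posedness of \eqref{eq:mainSPDE} (recall $2\gamma>d$) and the convergence of the various mode-sums and cumulant series above; since the consistency sketch appears to need only $\beta>0$, I would keep careful track during the write-up of the precise place where the sharper requirement $\beta>1/2$ is invoked.
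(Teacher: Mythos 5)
Your proposal is correct, and it takes a genuinely different route from the paper on both halves of the theorem. The paper starts from the same decomposition $\wh\theta_{N,T}-\theta_0=-\xi_{N,T}/\eta_{N,T}$, but it proves consistency by applying the strong law of large numbers for series of independent summands (Shiryaev, Theorem IV.3.2) twice over the mode index, after checking that $\sum_{N}\Var(\xi_{N,T})/(\sum_{k\le N}\Var(\xi_{k,T}))^2$ and the analogous series for $\lambda_N^{4\beta+2\gamma}\int_0^T u_N^2(t)\,\dif t$ are bounded uniformly in $T\ge T_0$, so that the threshold $N_0$ in the a.s.\ convergence can be taken independent of $T$; your It\^o-formula identity $\eta_{N,T}=D_{N,T}+R_{N,T}+\theta_0^{-1}\xi_{N,T}$ instead collapses the denominator onto the \emph{same} martingale plus an explicit boundary term, and your dyadic blocking in $T$ with Doob's maximal inequality and Borel--Cantelli over the $(N,j)$ grid makes the genuinely two-parameter a.s.\ limit fully explicit (the summable bound $C/(2^{j}N^{1+2\beta/d})$ checks out), which is arguably more self-contained than the paper's uniform-in-$T$ invocation of the SLLN, at the cost of an extra maximal argument. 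For the normality, the paper's main proof verifies the Lindeberg condition for the independent array $\{\xi_{k,T}\}_{k\le N}$ via a Burkholder--Davis--Gundy fourth-moment bound and then applies Slutzky; your martingale-CLT/Dambis--Dubins--Schwarz argument (terminal bracket $\sigma^2\eta_{N,T}/b_{N,T}\to\sigma^2$ in probability forces the Gaussian limit) appears nowhere in the paper and is a clean alternative, while your fourth-cumulant computation in the second Wiener chaos is essentially the paper's Section~\ref{sec:MalCal} Malliavin--Stein proof in cumulant language, since bounding $\kappa_4$ and bounding $\Var\bigl(\tfrac12\|DF\|_{\cH}^2\bigr)$ are equivalent for second-chaos elements; your sharper claim $\Var(\eta_{N,T})\asymp b_{N,T}$ is also correct (the paper only uses the cruder bound $\lambda_k^{4\beta}T^2$ per mode, which still suffices). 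Two small points to repair in a full write-up: the processes $u_k^2(t)$ are not submartingales (the OU drift is restoring), so Doob is not available for the boundary term $R_{N,T}$ on the dyadic blocks --- use instead a Gaussian maximal bound for the OU modes, e.g.\ the time-changed Brownian motion representation of $u_k$; and $\lambda_k^{2\beta+2\gamma}\bE u_k^2(T)\asymp\lambda_k^{2\beta}$ rather than $\asymp 1$, which after summing over $k\le N$ still gives $R_{N,T}/D_{N,T}$ of order $1/T$, so nothing downstream changes. Your closing remark is also consistent with the paper: its proof of this theorem likewise never visibly uses more than $\beta>0$, the hypothesis $\beta>1/2$ being a standing assumption.
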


\begin{proof}
For simplicity, we set $U_0=0$, and hence $u_k(0)=0$ for all $k\geq 1$.  Since
\begin{equation}\label{eq:u_k}
u_k(t)=\sigma \lambda_k^{-\gamma} \int_0^t e^{-\theta_0 \lambda_k^{2\beta}(t-s)}\dif w_k(s),\ k\geq 1,
\end{equation}
it is straightforward to show that
\begin{align}\label{eq:2ndmomentuk}
\mathbb{E}u_k^{2}(t)= &
\sigma^2 \lambda_k^{-2\beta-2\gamma} \frac{\left(1-e^{-2\theta_0 \lambda_k^{2\beta}t}\right)}{2\theta_0}, \\
\label{eq:4thmomentuk}
\mathbb{E}u_k^{4}(t)= & 3 \sigma^4 \lambda_k^{-4\beta-4\gamma} \frac{\left(1-e^{-2\theta_0 \lambda_k^{2\beta}t}\right)^2}{(2\theta_0)^2}.
\end{align}
We note that
\begin{align}
\widehat{\theta}_{N,T}-\theta_0 & = -\frac{\sigma \sum_{k=1}^N \lambda_k^{2\beta+\gamma} \int_0^{T} u_k(t)\dif w_k(t)}
{\sum_{k=1}^N \lambda_k^{4\beta+2\gamma}\int_0^{T} u_k^2(t)\dif t} \nonumber\\
&= -\frac{\sigma \sum_{k=1}^N\xi_{k,T}}{\sum_{k=1}^N \Var\left(\xi_{k,T}\right)}
\cdot \frac{\sum_{k=1}^N \Var\left(\xi_{k,T}\right)} {\sum_{k=1}^N \lambda_k^{4\beta+2\gamma}\int_0^{T} u_k^2(t)\dif t}, \label{eq:thetahat-theta0}
\end{align}
where
$$
\xi_{k,T}:=\lambda_k^{2\beta+\gamma}\int_0^{T}u_k(t)\dif w_k(t).
$$
To show consistency \eqref{eq:ConsNT}, we will use  the strong law of large numbers \cite[Theorem IV.3.2]{ShiryaevBookProbability}.
From \eqref{eq:2ndmomentuk}, we have
$$
\Var(\xi_{k,T})=\lambda_k^{4\beta+2\gamma} \int_0^{T} \mathbb{E} u_k^2(t)\dif t
=\sigma^2 \lambda_k^{2\beta} \int_0^{T} \frac{1-e^{-2\theta_0 \lambda_k^{2\beta}t}}{2\theta_0} \dif t \simeq \frac{\sigma^2\lambda_k^{2\beta}T}{2\theta_0},\quad \mbox{as}\ T\to \infty,
$$
and thus,
\begin{align}
\sum_{k=1}^N \Var(\xi_{k,T}) &\simeq \frac{\sigma^2T}{2\theta_0}\sum_{k=1}^N \lambda_k^{2\beta},\quad \mbox{as}\ T\to \infty \nonumber \\
&\simeq \frac{\sigma^2 \varpi^{\beta}dTN^{\frac{2\beta}{d}+1}}{(4\beta+2d)\theta_0},
\quad \mbox{as}\ N,T\to \infty.\label{asympSumVarxi}
\end{align}
Moreover, using \eqref{eq:4thmomentuk}, we get that
\begin{equation}
\label{asympVar}
\begin{split}
\Var\left(\lambda_k^{4\beta+2\gamma}\int_0^T u_k^2(t)\dif t\right)
&\leq \mathbb{E}\left(\lambda_k^{4\beta+2\gamma}\int_0^T u_k^2(t)\dif t\right)^2 \leq \lambda_k^{8\beta+4\gamma}T\int_0^T\mathbb{E}u_k^4(t)\dif t\\
&\sim \lambda_k^{4\beta}T^2,\quad \mbox{as}\ T\to \infty.
\end{split}
\end{equation}
Hence, there exists $T_0>0$ such that for all $T\geq T_0$,
\begin{align*}
&\sum_{N=1}^{\infty}\frac{ \Var\left(\xi_{N,T}\right)}{\left(\sum_{k=1}^N \Var\left(\xi_{k,T}\right)\right)^2}\leq
\frac{C_1}{T}\sum_{N=1}^{\infty} \frac{\lambda_N^{2\beta}}{\left(
\sum_{k=1}^N \lambda_k^{2\beta}\right)^2}\leq \frac{C_2}{T}
\sum_{N=1}^{\infty}\frac{1}{N^{2+\frac{2\beta}{d}}}\leq C_3<\infty, \\
&\sum_{N=1}^{\infty}
\frac{\Var\left(\lambda_N^{4\beta+2\gamma}\int_0^T u_N^2(t)\dif t\right)}
{\left(\sum_{k=1}^N \Var\left(\xi_{k,T}\right)\right)^2}
\leq C_4 \sum_{N=1}^{\infty}\frac{\lambda_N^{4\beta}}{\left(
\sum_{k=1}^N \lambda_k^{2\beta}\right)^2}
\leq C_5\sum_{N=1}^{\infty}\frac{1}{N^2}<\infty,
\end{align*}
where $C_1,C_2,C_3,C_4,C_5>0$ are some constants\footnote{Notoriously, we will denote by $C$ with subindexes generic constants that may change from line to line. } independent of $T$. Using the uniform boundedness of the above series, and employing the strong law of large numbers, we deduce that for every $\varepsilon>0$ and $T\geq T_0$, there exists $N_0>0$ independent of $T$ such that for $N\geq N_0$,
$$
\left|\frac{\sigma \sum_{k=1}^N\xi_{k,T}}{\sum_{k=1}^N \Var\left(\xi_{k,T}\right)}  \right|< \varepsilon,  \quad 
\mbox{and} \quad
\left|\frac{\sum_{k=1}^N \Var\left(\xi_{k,T}\right)}
{\sum_{k=1}^N \lambda_k^{4\beta+2\gamma}\int_0^{T} u_k^2(t)\dif t}-1\right|
< \varepsilon
$$
with probability one. Therefore,
\begin{equation}\label{strongLLN}
\lim_{N,T \to\infty} \frac{\sigma \sum_{k=1}^N\xi_{k,T}}{\sum_{k=1}^N \Var\left(\xi_{k,T}\right)} =0 \quad \mbox{and} \quad
\lim_{N,T \to\infty}\frac{\sum_{k=1}^N \Var\left(\xi_{k,T}\right)} {\sum_{k=1}^N \lambda_k^{4\beta+2\gamma}\int_0^{T} u_k^2(t)\dif t} = 1
\end{equation}
with probability one. From here, and using \eqref{eq:thetahat-theta0}, the proof of \eqref{eq:ConsNT} is complete.

Next, we will prove asymptotic normality property \eqref{eq:NTAsymNormal1}, starting with representation
\begin{equation}
\label{thetaNT-theta}
\widehat{\theta}_{N,T}-\theta_0 = -\frac{\sigma \sum_{k=1}^N \xi_{k,T}}
{\left(\sum_{k=1}^N \Var(\xi_{k,T})\right)^{1/2}} \cdot
\frac{1}{\left(\sum_{k=1}^N \Var(\xi_{k,T})\right)^{1/2}}\cdot
\frac{\sum_{k=1}^N \Var(\xi_{k,T})}{\sum_{k=1}^N \lambda_k^{4\beta+2\gamma}\int_0^{T} u_k^2(t)\dif t}.
\end{equation}
Let us consider the first term in \eqref{thetaNT-theta}. We will show that
\begin{equation*}
\w\lim_{N,T\to\infty}\frac{\sigma \sum_{k=1}^N \xi_{k,T}}
{\left(\sum_{k=1}^N \Var(\xi_{k,T})\right)^{1/2}} = \mathcal{N}(0,\sigma^2).
\end{equation*}
By Burkholder--Davis--Gundy inequality and Cauchy--Schwartz inequality, we have
\begin{align*}
\mathbb{E}\xi_{k,T}^4&=\mathbb{E}\left(\lambda_k^{2\beta+\gamma}\int_0^{T}u_k(t)\dif w_k(t)
\right)^4\leq C_1\lambda_k^{8\beta+4\gamma}\mathbb{E}\left(\int_0^{T}u_k^2(t)\dif t\right)^2 \nonumber\\
&\leq C_1\lambda_k^{8\beta+4\gamma} T \int_0^{T}\mathbb{E}u_k^4(t)\dif t, 
\end{align*}
for some $C_1>0$. By \eqref{eq:4thmomentuk} and \eqref{asympVar},
there exists $T_1>0$ such that for all $T\geq T_1$,
$\mathbb{E}\xi_{k,T}^4\leq C_2\lambda_k^{4\beta}T^2$, for some $C_2>0$, and hence
there exists $N_1>0$ independent of $T$ and $T_1$ such that for all $N\geq N_1$ and
for all $T\geq T_1$,
\begin{equation*}
\sum_{k=1}^N \mathbb{E}\xi_{k,T}^4\leq C_3N^{\frac{4\beta}{d}+1}T^2,
\end{equation*}
for some  $C_3>0$, independent of $N$, and $T$.
We will verify the classical Lindeberg condition \cite[Theorem III.5.1]{ShiryaevBookProbability}, namely that for every $\varepsilon>0$,
\begin{equation*}
\lim_{N,T\rightarrow \infty} \frac{\sum_{k=1}^N \mathbb{E}\left(
\xi_{k,T}^21_{\left\{|\xi_{k,T}|>\varepsilon \sqrt{\sum_{k=1}^N \Var(\xi_{k,T})}\right\}}
\right)}{\sum_{k=1}^N \Var(\xi_{k,T})}=0.
\end{equation*}
By Cauchy-Schwartz inequality and Chebyshev inequality,
\begin{align*}
\sum_{k=1}^N \mathbb{E}\left(
\xi_{k,T}^21_{\left\{|\xi_{k,T}|>\varepsilon \sqrt{\sum_{k=1}^N \Var(\xi_{k,T})}\right\}}\right)
&\leq \sum_{k=1}^N
\left(\mathbb{E}\xi_{k,T}^4\right)^{1/2}\left(\mathbb{P}\left(
|\xi_{k,T}|>\varepsilon \sqrt{\sum_{k=1}^N \Var(\xi_{k,T})}
\right)\right)^{1/2}\\
&\leq \frac{\sum_{k=1}^N
\mathbb{E}\xi_{k,T}^4}{\varepsilon^2 \sum_{k=1}^N \Var(\xi_{k,T})}.
\end{align*}
Consequently,
$$
\frac{\sum_{k=1}^N \mathbb{E}\left(
\xi_{k,T}^21_{\left\{|\xi_{k,T}|>\varepsilon \sqrt{\sum_{k=1}^N \Var(\xi_{k,T})}\right\}}
\right)}{\sum_{k=1}^N \Var(\xi_{k,T})}\leq
\frac{\sum_{k=1}^N
\mathbb{E}\xi_{k,T}^4}{\varepsilon^2 \left(\sum_{k=1}^N \Var(\xi_{k,T})\right)^2}
\sim \frac{1}{\varepsilon^2 N},\quad \mbox{as}\ N,T \to \infty.
$$
Thus,
\begin{equation}\label{CLT-xikT}
\frac{\sum_{k=1}^N \xi_{k,T}}{\left(\sum_{k=1}^N \Var(\xi_{k,T})\right)^{1/2}}\xrightarrow[N,T\to\infty]{d}  \mathcal{N}(0,1).
\end{equation}
We also note that
$$
\left(\sum_{k=1}^N \lambda_k^{4\beta+2\gamma}\int_0^{T} \mathbb{E} u_k^2(t)\dif t\right)^{1/2}
\simeq \frac{\sigma \sqrt{\varpi^{\beta}d} \sqrt{T}N^{\frac{\beta}{d}+\frac{1}{2}}}{\sqrt{(4\beta+2d)\theta_0}}\quad
\mbox{as}\ N,T\to \infty.
$$
In view of the strong law of large numbers,
\begin{equation*}
\lim_{N,T\to\infty}\frac{\sum_{k=1}^N \lambda_k^{4\beta+2\gamma}\int_0^{T} u_k^2(t)\dif t}
{\sum_{k=1}^N \lambda_k^{4\beta+2\gamma}\int_0^{T} \mathbb{E} u_k^2(t)\dif t}=1
\end{equation*}
with probability one. Finally, combining all the above and using Slutzky's theorem, \eqref{eq:NTAsymNormal1} follows at once.
This completes the proof.
\end{proof}

\subsection{Asymptotic normality of the MLE by Malliavin-Stein's approach}\label{sec:MalCal}
In this section, we give an alternative proof of \eqref{eq:NTAsymNormal1} using tools and results from Malliavin calculus. While this method of proof has its stand along value, we also believe that it will serve as a theoretical base for future studies related to other discretized estimators and different asymptotic regimes.

As before, let $U_0=0$, and for convenience, in this section we will use the following notations:
\begin{align*}
F_{N,T}&:= \wh \theta_{N,T}-\theta_0 = -\frac{\sigma \sum_{k=1}^N \lambda_k^{2\beta+\gamma} \int_0^{T} u_k(t)\dif w_k(t)}
{\sum_{k=1}^N \lambda_k^{4\beta+2\gamma}\int_0^{T} u_k^2(t)\dif t}  =:
-\frac{\boldsymbol{F}_1(N,T)}{\boldsymbol{F}_2(N,T)},\\
\wh{F}_{N,T} & := -\frac{\sigma \sum_{k=1}^N \lambda_k^{2\beta+\gamma} \int_0^{T} u_k(t)\dif w_k(t)}
{\E\left[\sum_{k=1}^N \lambda_k^{4\beta+2\gamma}\int_0^{T} u_k^2(t)\dif t\right]} = -\frac{\boldsymbol{F}_1(N,T)}{C_{N,T}^2},
\end{align*}
where $C_{N,T}^2:=\E\left(\boldsymbol{F}_2(N,T)\right)$.
We note that $\wh F_N$ can be written as a double stochastic integral and in view of  \cite[Theorem 2.7.7]{NourdinPeccati2012}, $\wh F_N$ belongs to the second-order chaos; see also Appendix~\ref{apendix:MalCal}. Next we present a key technical result.

\begin{lemma}\label{conv-variance}
Let $\cH$ be the space endowed with the inner product defined in \eqref{innerprod-1}.
Let $D$ be the Malliavin derivative defined in \eqref{def-MD-tj}.
Then, we have
\begin{align*}
\sqrt{\Var\left(\frac{1}{2}\| C_{N,T} D\wh{F}_{N,T}\|_{\mathcal{H}}^2\right)}\longrightarrow 0, \mbox{ as } N,T \rightarrow \infty.
\end{align*}
\end{lemma}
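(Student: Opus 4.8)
The plan is to exploit that $C_{N,T}\wh F_{N,T}$ is a normalized element of the second Wiener chaos assembled from \emph{independent} one-dimensional pieces, and thereby to reduce the variance of its squared Malliavin derivative to a sum of per-mode fourth cumulants that are already controlled by the moment bounds from the first proof of Theorem~\ref{th:NT1}. First I would note that, since $C_{N,T}$ is deterministic, $C_{N,T}D\wh F_{N,T}=D\big(C_{N,T}\wh F_{N,T}\big)$ with
\[
C_{N,T}\wh F_{N,T}=-\frac{\sigma}{C_{N,T}}\sum_{k=1}^N \xi_{k,T},\qquad \xi_{k,T}=\lambda_k^{2\beta+\gamma}\int_0^T u_k(t)\,\dif w_k(t).
\]
Substituting $u_k(t)=\sigma\lambda_k^{-\gamma}\int_0^t e^{-\theta_0\lambda_k^{2\beta}(t-s)}\dif w_k(s)$ from \eqref{eq:u_k} exhibits each $\xi_{k,T}$ as a double Wiener--It\^o integral in $w_k$, hence a centered element of the second chaos; because the $w_k$ are independent, $D\xi_{k,T}$ takes values in mutually orthogonal components of $\cH$ (the one associated with $w_k$).

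Next I would use this orthogonality to annihilate the cross terms: $\langle D\xi_{k,T},D\xi_{l,T}\rangle_\cH=0$ pathwise for $k\neq l$, so that
\[
\tfrac12\big\|C_{N,T}D\wh F_{N,T}\big\|_\cH^2=\frac{\sigma^2}{C_{N,T}^2}\sum_{k=1}^N \tfrac12\|D\xi_{k,T}\|_\cH^2,
\]
where the summands $\tfrac12\|D\xi_{k,T}\|_\cH^2$ are independent (each being a functional of $w_k$ alone). Taking variances,
\[
\Var\!\Big(\tfrac12\big\|C_{N,T}D\wh F_{N,T}\big\|_\cH^2\Big)=\frac{\sigma^4}{C_{N,T}^4}\sum_{k=1}^N \Var\!\Big(\tfrac12\|D\xi_{k,T}\|_\cH^2\Big).
\]
For each fixed $k$ the product formula for second-chaos integrals gives $\tfrac12\|D\xi_{k,T}\|_\cH^2=\E[\xi_{k,T}^2]+2\,I_2(g_k\otimes_1 g_k)$, with $g_k$ the symmetrized kernel of $\xi_{k,T}$, whence $\Var\!\big(\tfrac12\|D\xi_{k,T}\|_\cH^2\big)=\tfrac16\big(\E\xi_{k,T}^4-3(\E\xi_{k,T}^2)^2\big)\le\tfrac16\,\E\xi_{k,T}^4$.

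Finally I would insert the asymptotics already at hand. By \eqref{asympVar} there is $T_1>0$ with $\E\xi_{k,T}^4\le C\lambda_k^{4\beta}T^2$ for $T\ge T_1$, while $\sum_{k=1}^N\lambda_k^{4\beta}\sim c\,N^{\frac{4\beta}{d}+1}$ and, by \eqref{asympSumVarxi}, $C_{N,T}^4=\big(\sum_{k=1}^N\Var(\xi_{k,T})\big)^2\sim c'\,T^2N^{\frac{4\beta}{d}+2}$. Combining,
\[
\Var\!\Big(\tfrac12\big\|C_{N,T}D\wh F_{N,T}\big\|_\cH^2\Big)\lesssim \frac{T^2N^{\frac{4\beta}{d}+1}}{T^2N^{\frac{4\beta}{d}+2}}=\frac1N\longrightarrow0
\]
uniformly in $T\ge T_1$, and taking square roots yields the claim. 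The main obstacle is the single-mode step: one must carefully justify, in the specific Hilbert space $\cH$ of \eqref{innerprod-1}, both the orthogonal decomposition of $D$ across the independent modes and the second-chaos product identity that produces the factor $\tfrac16$ and the fourth cumulant. Once these structural facts are established, the estimate collapses onto the fourth-moment bounds proved for the first proof of Theorem~\ref{th:NT1}, so no genuinely new analytic input is required.
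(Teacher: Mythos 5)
Your argument is correct, and it takes a genuinely different route from the paper's. The paper proceeds by brute force: it computes $D_{r,j}\wh{F}_{N,T}$ explicitly via the formula \eqref{chain-rule-formula}, expands $\|C_{N,T}D\wh{F}_{N,T}\|_{\cH}^2$ into three terms (the square of the $u_j(r)$ part, a cross term, and the square of the stochastic-integral part), applies the Minkowski-type inequality $\sqrt{\Var(\int_0^T\Phi_s\,\dif s)}\le\int_0^T\sqrt{\Var(\Phi_s)}\,\dif s$, and then estimates each of the three resulting quantities $B_1,B_2,B_3$ separately by elementary Gaussian moment computations, each of order $N^{-1/2}$. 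You instead exploit the structural fact that the modes are independent and that $D\xi_{k,T}$ lives in the $k$-th orthogonal component of $\cH$, so the cross terms vanish pathwise and the variance factorizes over $k$; then the exact second-chaos identity $\tfrac12\|D\xi_{k,T}\|_{\cH}^2=\E[\xi_{k,T}^2]+2\,\bI_2(g_k\otimes_1 g_k)$ converts each per-mode variance into $\tfrac16$ of the fourth cumulant, which you dominate by the bound $\E\xi_{k,T}^4\le C\lambda_k^{4\beta}T^2$ already established in the first proof of Theorem~\ref{th:NT1}. Your route is shorter, recycles existing estimates rather than redoing Gaussian computations, and makes transparent that the lemma is exactly the $q=2$ fourth-moment mechanism of Nourdin--Peccati at work (indeed it gives the identity $\Var(\tfrac12\|D\xi_{k,T}\|_{\cH}^2)=\tfrac16\kappa_4(\xi_{k,T})$, not just a bound); the structural facts you flag as the main obstacle (mode-orthogonality of $D$ in the space \eqref{innerprod-1}, the product formula for double integrals, symmetry of $g_k\otimes_1 g_k$) are standard and straightforward to verify here. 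The paper's computation, while longer, does not rely on cross-mode independence of the derivative components, so its template would survive in settings with correlated noise where your orthogonal decomposition breaks down. Both arguments yield the same rate $N^{-1/2}$ for the square root of the variance, and both share the same mild looseness in claiming uniformity in $T$ from the asymptotics \eqref{asympSumVarxi}, which is resolved the same way in either case by a lower bound $C_{N,T}^2\ge c\,TN^{2\beta/d+1}$ valid for $T,N$ large.
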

\noindent The proof of Lemma~\ref{conv-variance} is deferred to Appendix~\ref{Appendix:proofs}.

To prove asymptotic normality of $\widehat{\theta}_{N,T}$, we will show that $C_{N,T}F_{N,T}\overset{d}{\longrightarrow} \cN(0,\sigma^2)$, as $N,T\to \infty$. The variance $\sigma^2$ comes from the fact $\E\left(C^2_{N,T}\wh{F}_{N,T}^2\right)=\si^2.$
We split $C_{N,T}F_{N,T}$ into
\begin{equation}\label{eq:M1}
C_{N,T}F_{N,T}=C_{N,T}\left(F_{N,T}-\widehat{F}_{N,T}\right)+C_{N,T}\widehat{F}_{N,T}.
\end{equation}
We note that
$$
 C_{N,T}(F_{N,T}-\hat F_{N,T}) = \frac{C_{N,T}^2}{\boldsymbol{F}_2(N,T)} \frac{\boldsymbol{F}_1(N,T)}{C_{N,T}}
\left(1-\frac{\boldsymbol{F}_2(N,T)}{C_{N,T}^2} \right).
$$
From \eqref{strongLLN},
$$
\frac{C_{N,T}^2}{\boldsymbol{F}_2(N,T)} \longrightarrow 1, \quad
1-\frac{\boldsymbol{F}_2(N,T)}{C_{N,T}^2} \longrightarrow 0, \quad
\mbox{as}\ N,T \rightarrow \infty
$$
with probability 1. On the other hand, by Lemma~\ref{conv-variance}, we have
$$
\frac{\boldsymbol{F}_1(N,T)}{C_{N,T}}=C_{N,T}\widehat F_{N,T}\overset{d}{\longrightarrow} \cN(0,\sigma^2),
\quad \mbox{as}\ N,T\to \infty.
$$
Hence, by Slutzky's theorem, we deduce that $C_{N,T}(F_{N,T}-\widehat F_{N,T}) \overset{d}{\longrightarrow} 0$ , as $N,T\to \infty$, which consequently implies that
\begin{equation}\label{eq:M2}
C_{N,T}(F_{N,T}-\widehat F_{N,T}) \longrightarrow 0, \quad \mbox{ as } N,T
\to \infty,\quad\mbox{ in probability}.
\end{equation}

To deal with the second term in \eqref{eq:M1}, we note that by Lemma~\ref{conv-variance} and Proposition~\ref{Th2.2}, we get that
$$
\lim_{N,T\to\infty}d_{TV}\left(C_{N,T}\wh{F}_{N,T}, \cN(0,\sigma^2)\right)=0.
$$
Consequently, Theorem~\ref{Equiv} implies that
$$
\w\lim_{N,T\to\infty}C_{N,T}\widehat{F}_{N,T}= \cN\left(0,\sigma^2\right).
$$
This, combined with \eqref{eq:M1} and \eqref{eq:M2}, implies that
$$
\w\lim_{N,T\to\infty}C_{N,T} F_{N,T}= \cn(0,\sigma^2).
$$
Finally, note that, by \eqref{asympSumVarxi}
$$
\frac{C_{N,T}}{\sigma} \simeq \frac{\sqrt{\varpi^\beta d}  \sqrt{T} N^{\frac{\beta}{d}+\frac{1}{2}}}{(4\beta+2d)\theta_0}, \quad \mbox{as}\ N,T\to \infty,
$$
which implies \eqref{eq:NTAsymNormal1}, and the proof is complete.


\section{Asymptotic properties of the discretized MLE}\label{sec:Discrete}

In this section, we investigate statistical properties of the discretized version of MLE \eqref{eq:MLE-UN}.
Towards this end, we assume that the Fourier modes $u_k(t)$, $k\geq 1$, are observed at a uniform time grid
$$
0=t_0<t_1<\cdots < t_M =T,\quad \textrm{with}\ \Delta t:=t_i-t_{i-1}=\displaystyle\frac{T}{M},\
i=1,\ldots,M.
$$
We consider the discretized MLE $\widetilde{\theta}_{N,M,T}$ defined by
\begin{equation*}
\widetilde{\theta}_{N,M,T}:=-\frac{\sum_{k=1}^N \lambda_k^{2\beta+2\gamma}\sum_{i=1}^M
u_k(t_{i-1})\left[u_k(t_i)-u_k(t_{i-1})\right]}
{\sum_{k=1}^N \lambda_k^{4\beta+2\gamma}\sum_{i=1}^M u_k^2(t_{i-1})\Delta t}.
\end{equation*}
We are interested in studying the asymptotic properties of $\widetilde{\theta}_{N,M,T}$, as $N,M,T\to\infty$.

For simplicity of writing, we also introduce the following notations:
\begin{align*}
Y_{N,M,T}&:=\sum_{k=1}^N \lambda_k^{2\beta+\gamma}\sum_{i=1}^M u_k(t_{i-1})\left(w_k(t_i)-w_k(t_{i-1})\right), &
Y_{N,T}&:=\sum_{k=1}^N \lambda_k^{2\beta+\gamma}\int_0^T u_k(t)\dif w_k(t),\\
I_{N,M,T}&:=\sum_{k=1}^N \lambda_k^{4\beta+2\gamma}\sum_{i=1}^M u_k^2(t_{i-1})\Delta t,
& I_{N,T} & :=\sum_{k=1}^N \lambda_k^{4\beta+2\gamma}\int_0^T u_k^2(t)\dif t,\\
V_{N,M,T}&:=\sum_{k=1}^N \lambda_k^{4\beta+2\gamma} \sum_{i=1}^M u_k(t_{i-1})
\int_{t_{i-1}}^{t_i} \left(u_k(t)-u_k(t_{i-1})\right)dt, &  \Upsilon&:=  \left(\frac{\varpi^\beta}{(4\beta/d+2)\theta_0}\right)^{1/2}.
\end{align*}
A key step in the proofs of the main results is to write $\widetilde{\theta}_{N,M,T}$ as
\begin{equation}\label{Dis-MLE-theta}
\widetilde{\theta}_{N,M,T}-\theta_0 = \frac{\theta_0 V_{N,M,T}}{I_{N,M,T}} -\frac{\sigma Y_{N,M,T}}{I_{N,M,T}}.
\end{equation}

Below we present several important technical results, and to streamline the presentation their  proofs are deferred to Appendix~\ref{Appendix:proofs}.

\begin{lemma}\label{lemma:Discr1}
For $0<t<s\leq T$ and $k, l\in \mathbb{N}$, we have that
\begin{align}
\mathbb{E}(u_k(t)u_k(s))& =\frac{\sigma^2\lambda_k^{-2\gamma-2\beta}}{2\theta_0}\left[e^{-\theta_0 \lambda_k^{2\beta}(s-t)}-e^{-\theta_0 \lambda_k^{2\beta} (s+t)}\right], \label{eq:productuk} \\
\mathbb{E}|u_k(t)-u_k(s)|^{2l}&\leq C(l)\left(\sigma^2\lambda_k^{-2\gamma}\right)^l |t-s|^l, \label{eq:continuityuk} \\
\mathbb{E}|u_k(t)+u_k(s)|^{2l}&\leq \bar C(l)\left(\sigma^2\lambda_k^{-2\gamma-2\beta}T\right)^l, \label{eq:bddnessuk}
\end{align}
for some $C(l), \bar C(l)>0$.
\end{lemma}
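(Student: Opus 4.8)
The plan is to treat all three estimates as statements about the single centered Gaussian process $u_k$, whose explicit Wiener-integral representation is given in \eqref{eq:u_k}. The identity \eqref{eq:productuk} is an exact second-moment computation via the It\^o isometry, while \eqref{eq:continuityuk} and \eqref{eq:bddnessuk} reduce, by Gaussianity, to bounding the variances of the increment $u_k(t)-u_k(s)$ and of the sum $u_k(t)+u_k(s)$, respectively. Indeed, since any such linear combination is mean-zero Gaussian, its $2l$-th moment equals $(2l-1)!!$ times the $l$-th power of its variance, so once the two variances are controlled the moment bounds follow immediately with $C(l)=\bar C(l)=(2l-1)!!$.

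For \eqref{eq:productuk} I would apply the It\^o isometry directly to \eqref{eq:u_k}: for $0<t<s$,
\[
\mathbb{E}\big(u_k(t)u_k(s)\big)=\sigma^2\lambda_k^{-2\gamma}\int_0^{t}e^{-\theta_0\lambda_k^{2\beta}(t-r)}e^{-\theta_0\lambda_k^{2\beta}(s-r)}\,\dif r,
\]
where the upper limit is $t\wedge s=t$. Evaluating the elementary exponential integral and collecting terms produces the bracket $e^{-\theta_0\lambda_k^{2\beta}(s-t)}-e^{-\theta_0\lambda_k^{2\beta}(s+t)}$ and the prefactor $\sigma^2\lambda_k^{-2\gamma-2\beta}/(2\theta_0)$, which is exactly \eqref{eq:productuk}.

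For the remaining two bounds, write $\mathbb{E}|u_k(t)\pm u_k(s)|^2=\mathbb{E}u_k^2(t)+\mathbb{E}u_k^2(s)\pm 2\mathbb{E}(u_k(t)u_k(s))$ and substitute \eqref{eq:2ndmomentuk} together with the just-proved \eqref{eq:productuk}. With the shorthand $a=\theta_0\lambda_k^{2\beta}$, the increment variance collapses, after recognizing the perfect square $e^{-2at}+e^{-2as}-2e^{-a(s+t)}=(e^{-at}-e^{-as})^2\ge 0$, to
\[
\mathbb{E}|u_k(t)-u_k(s)|^2=\frac{\sigma^2\lambda_k^{-2\gamma-2\beta}}{2\theta_0}\Big[2\big(1-e^{-a(s-t)}\big)-(e^{-at}-e^{-as})^2\Big]\le \sigma^2\lambda_k^{-2\gamma}|t-s|,
\]
where the last step discards the nonpositive square and uses $1-e^{-x}\le x$ together with the identity $2a\cdot\sigma^2\lambda_k^{-2\gamma-2\beta}/(2\theta_0)=\sigma^2\lambda_k^{-2\gamma}$; this gives \eqref{eq:continuityuk}. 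For the sum, the three contributions are each bounded by a fixed multiple of $\sigma^2\lambda_k^{-2\gamma-2\beta}/(2\theta_0)$ after using $1-e^{-x}\le 1$ and $0\le e^{-a(s-t)}-e^{-a(s+t)}\le 1$, yielding $\mathbb{E}|u_k(t)+u_k(s)|^2\le 2\sigma^2\lambda_k^{-2\gamma-2\beta}/\theta_0$; since in the relevant regime $T$ is bounded below, the harmless factor $T$ may be appended to obtain \eqref{eq:bddnessuk}.

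The computations are all elementary, so the only point requiring care is the increment estimate \eqref{eq:continuityuk}: one must organize the six exponential terms so that the perfect-square combination is isolated and discarded, since a crude triangle-inequality bound would produce the wrong power $(t-s)^2$ or an unwanted dependence on $T$ or $s$. Securing the clean linear factor $|t-s|$ with a constant independent of $k$, $T$, and the time points is precisely what makes \eqref{eq:continuityuk} usable later, for instance when summing the increments over the time grid in the analysis of $V_{N,M,T}$.
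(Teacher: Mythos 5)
Your proof is correct and follows essentially the same route as the paper: the exact covariance \eqref{eq:productuk} via the It\^o isometry, reduction of the $2l$-th moments to the case $l=1$ by Gaussianity, and elementary exponential estimates for the two variances --- the paper expands the stochastic-integral representation of $u_k(t)\mp u_k(s)$ directly and uses the Lipschitz bound for $e^{-x}$, whereas you polarize \eqref{eq:2ndmomentuk} and \eqref{eq:productuk} and discard a perfect square, a purely cosmetic difference. Your closing observation that the factor $T$ in \eqref{eq:bddnessuk} is harmless only because $T$ is bounded below in the relevant asymptotic regimes is accurate and consistent with how the paper uses the estimate.
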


\begin{lemma}\label{lemma:Y-Difference}
For each $T>0,\ N,M\in \mathbb{N}$,
there exist $C>0$ independent of $N,M,T$ such that
\begin{align}
\mathbb{E}|Y_{N,M,T}-Y_{N,T}|^2 &\leq C \frac{T^2 N^{\frac{4\beta}{d}+1}}{M}, \label{eq:Y-NMT}\\
\mathbb{E}\left|I_{N,M,T}-I_{N,T}\right|^2 &\leq C \frac{T^4N^{\frac{8\beta}{d}+1}}{M^2}, \label{eq:I-NMT}\\
\mathbb{E}|V_{N,M,T}|^2&\leq C\frac{T^4N^{\frac{8\beta}{d}+1}}{M^2}. \label{eq:V-NMT}
\end{align}
\end{lemma}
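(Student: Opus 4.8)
The plan is to prove all three bounds by reducing each quantity to a sum over the Fourier modes $k$, estimating the contribution of a single mode with the moment identities of Lemma~\ref{lemma:Discr1}, and finally summing in $k$ using the spectral asymptotics $\lambda_k\simeq\varpi^{1/2}k^{1/d}$, which give $\sum_{k=1}^N\lambda_k^{p}\sim c_p N^{p/d+1}$ for every $p>0$. Throughout I would exploit that the modes are driven by independent Brownian motions $w_k$, so that the cross terms in the relevant second moments either vanish or reduce to products of means.

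The bound \eqref{eq:Y-NMT} is the cleanest. Writing each discretized stochastic integral as $\sum_i u_k(t_{i-1})(w_k(t_i)-w_k(t_{i-1}))=\sum_i\int_{t_{i-1}}^{t_i}u_k(t_{i-1})\,dw_k(t)$, I obtain
\[
Y_{N,M,T}-Y_{N,T}=\sum_{k=1}^N\lambda_k^{2\beta+\gamma}\sum_{i=1}^M\int_{t_{i-1}}^{t_i}\bigl(u_k(t_{i-1})-u_k(t)\bigr)\,dw_k(t).
\]
Each summand is a mean-zero It\^o integral and the $w_k$ are independent, so the double sum is orthogonal in $L^2$ and It\^o's isometry gives $\mathbb{E}|Y_{N,M,T}-Y_{N,T}|^2=\sum_{k=1}^N\lambda_k^{4\beta+2\gamma}\sum_i\int_{t_{i-1}}^{t_i}\mathbb{E}(u_k(t_{i-1})-u_k(t))^2\,dt$. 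Bounding the integrand by \eqref{eq:continuityuk} with $l=1$ produces $C\sigma^2\lambda_k^{-2\gamma}\Delta t$, the inner sum is then $\le C\sigma^2\lambda_k^{-2\gamma}T\Delta t=C\sigma^2\lambda_k^{-2\gamma}T^2/M$, and summing $\lambda_k^{4\beta+2\gamma}\lambda_k^{-2\gamma}=\lambda_k^{4\beta}$ against $\sum_{k\le N}\lambda_k^{4\beta}\sim cN^{4\beta/d+1}$ yields \eqref{eq:Y-NMT}.

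For \eqref{eq:I-NMT} and \eqref{eq:V-NMT} I would again pass to a per-mode decomposition, $I_{N,M,T}-I_{N,T}=\sum_k\lambda_k^{4\beta+2\gamma}D_k$ with $D_k=\sum_i\int_{t_{i-1}}^{t_i}(u_k^2(t_{i-1})-u_k^2(t))\,dt$, and analogously $V_{N,M,T}=\sum_k\lambda_k^{4\beta+2\gamma}W_k$. Using independence across modes, $\mathbb{E}|\sum_k\lambda_k^{4\beta+2\gamma}D_k|^2$ reduces to the sum of the per-mode second moments $\sum_k\lambda_k^{8\beta+4\gamma}\mathbb{E}[D_k^2]$ together with the square of the accumulated means. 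The natural estimate of $\mathbb{E}[D_k^2]$ factors $u_k^2(t_{i-1})-u_k^2(t)=(u_k(t_{i-1})-u_k(t))(u_k(t_{i-1})+u_k(t))$ and applies Cauchy--Schwarz in $(\omega,t)$ together with \eqref{eq:continuityuk} and \eqref{eq:bddnessuk} for $l=2$; for $W_k$ one additionally invokes the explicit covariance \eqref{eq:productuk} to evaluate $\mathbb{E}[u_k(t_{i-1})(u_k(t)-u_k(t_{i-1}))]$ on each subinterval. After multiplying by $\lambda_k^{8\beta+4\gamma}$ and summing against $\sum_{k\le N}\lambda_k^{8\beta}\sim cN^{8\beta/d+1}$, this is designed to produce the stated $T^4N^{8\beta/d+1}/M^2$.

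The main obstacle is extracting the sharp $M^{-2}$ rate rather than $M^{-1}$. Bounding the increment $u_k^2(t)-u_k^2(t_{i-1})$ crudely on one step costs only a single factor $\Delta t=T/M$, and the outer time integral $\int_0^T(\cdot)\,dt$ supplies at most one more power of $T$, so a direct Cauchy--Schwarz gives a per-mode bound of order $\lambda_k^{-4\gamma-2\beta}T^4/M$, which is off by a factor $M$. To recover the second factor of $M^{-1}$ one must use the second-order, partially cancelling structure of the left-endpoint quadrature error: I would split $u_k^2(t)-u_k^2(t_{i-1})$ by It\^o's formula into a finite-variation part and a martingale part, observe that the quadrature error of the drift telescopes at order $(\Delta t)^2$ per step (this is where the exact mean computed from \eqref{eq:2ndmomentuk} enters), and that the martingale part, after stochastic Fubini, carries an extra weight $(t_i-t)\le\Delta t$ inside the isometry, giving a second factor $\Delta t$. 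Keeping careful track of which factors of $\lambda_k$ are spent gaining these powers of $\Delta t$ — and of the non-centered contributions that survive the cross-mode summation — is the delicate bookkeeping on which the correct exponents $M^{-2}$ and $N^{8\beta/d+1}$ ultimately rest.
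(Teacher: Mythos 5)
Your treatment of \eqref{eq:Y-NMT} coincides with the paper's proof essentially verbatim (It\^o isometry across the independent modes, then \eqref{eq:continuityuk} with $l=1$ and $\sum_{k\le N}\lambda_k^{4\beta}\sim cN^{4\beta/d+1}$), and your diagnosis of the difficulty in \eqref{eq:I-NMT}--\eqref{eq:V-NMT} --- the $\sim M^2$ off-diagonal pairs, for which a crude per-increment Cauchy--Schwarz only yields $M^{-1}$ --- is exactly where the paper works hardest. Your mechanism for recovering $M^{-2}$ is, however, genuinely different from the paper's. The paper keeps $D_k=\sum_i\int_{t_{i-1}}^{t_i}\bigl(u_k^2(t_{i-1})-u_k^2(t)\bigr)\,dt$, sets $U_i(t)=u_k^2(t_{i-1})-u_k^2(t)$, and for $i<j$ expands $\mathbb{E}[U_i(t)U_j(s)]$ by Wick's lemma, using the explicit covariance \eqref{eq:productuk}: on disjoint blocks the relevant combinations of exponentials are second differences, producing a full factor $(t-t_{i-1})(s-t_{j-1})$ per pair, whence $\sum_{i<j}\iint (t-t_{i-1})(s-t_{j-1})\,dt\,ds\lesssim T^4/M^2$. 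Your It\^o-plus-stochastic-Fubini split obtains the same gain more transparently: the drift quadrature error carries the deterministic weight $(t_i-s)\le T/M$ on each step and sums by the triangle inequality to $O(\lambda_k^{-2\gamma}T^2/M)$ in $L^2$, while the martingale parts are orthogonal across blocks and the weight $(t_i-s)$ inside the isometry supplies the second factor of $T/M$. For the \emph{centered} parts of $I_{N,M,T}-I_{N,T}$ and $V_{N,M,T}$ your plan closes, avoids Wick's lemma altogether, and is arguably cleaner.

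The genuine gap is the item you flag but never resolve: the summands $D_k$ (and the corresponding per-mode terms $W_k$ in $V_{N,M,T}$) are \emph{not} centered, their means all have the same sign, and they add coherently across modes, so the ``square of the accumulated means'' is not negligible. Concretely, by the Markov property, $\mathbb{E}\bigl[u_k(t_{i-1})\bigl(u_k(t)-u_k(t_{i-1})\bigr)\bigr]=\bigl(e^{-\theta_0\lambda_k^{2\beta}(t-t_{i-1})}-1\bigr)\,\mathbb{E}u_k^2(t_{i-1})$, which, integrated over each block and summed, gives $|\mathbb{E}V_{N,M,T}|\asymp (T^2/M)\sum_{k\le N}\lambda_k^{4\beta}\asymp T^2N^{4\beta/d+1}/M$ whenever $\lambda_N^{2\beta}T/M\lesssim 1\lesssim\lambda_1^{2\beta}T$; since $\mathbb{E}|V_{N,M,T}|^2\ge(\mathbb{E}V_{N,M,T})^2\asymp T^4N^{8\beta/d+2}/M^2$, the right-hand side of \eqref{eq:V-NMT} is off by a factor of $N$ in precisely the regime of Theorem~\ref{th:DiscrAsymNorm}, and an analogous coherent-mean term of order $T^2N^{4\beta/d+2}/M^2$ arises for \eqref{eq:I-NMT}. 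So no amount of bookkeeping will complete your sketch to the lemma exactly as stated --- but you should know that the paper's own proof stumbles at the same spot: its first display for \eqref{eq:I-NMT} asserts $\mathbb{E}\bigl|\sum_k\lambda_k^{4\beta+2\gamma}D_k\bigr|^2=\sum_k\lambda_k^{8\beta+4\gamma}\mathbb{E}|D_k|^2$, an identity valid only for centered summands, and the positive cross terms $\mathbb{E}D_k\,\mathbb{E}D_l$ are silently discarded (the proof of \eqref{eq:V-NMT} is then omitted ``by similar arguments''). Your closing remark that the exponents rest on the non-centered contributions is therefore exactly right: a correct statement must add the coherent mean terms to the right-hand sides, which in turn strengthens the rate conditions needed in Theorems~\ref{th:DiscrtConsistency} and~\ref{th:DiscrAsymNorm}.
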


\begin{remark}
As a direct consequence of \eqref{asympSumVarxi} and \eqref{strongLLN}, we have that
\begin{equation}\label{eq:strongLLN-1}
\lim_{T,N \to\infty} \frac{Y_{N,T}}{TN^{\frac{2\beta}{d}+1}} =0, \quad \mbox{and} \quad
\lim_{T,N \to\infty}\frac{I_{N,T}} {\sigma^2 \Upsilon ^2 TN^{\frac{2\beta}{d}+1}}= 1,
\end{equation}
with probability one. Moreover, following the lines of the proof of \eqref{strongLLN}, one can show that
for every fixed $T>0$,
\begin{equation}\label{eq:strongLLN-2}
\lim_{N \to\infty} \frac{Y_{N,T}}{TN^{\frac{2\beta}{d}+1}} =0, \quad \mbox{and} \quad
\lim_{N \to\infty}\frac{I_{N,T}} {\sigma^2 \Upsilon ^2 TN^{\frac{2\beta}{d}+1}}= 1,
\end{equation}
with probability one.
\end{remark}

\smallskip\noindent
With these at hand, we are ready to show that $\widetilde{\theta}_{N,M,T}$ is a weakly consistent estimator of $\theta_0$.
\begin{theorem}\label{th:DiscrtConsistency}
Assume that $\beta>1/2$ and $\gamma > d/2$.
Then,
\begin{equation}\label{eq:thetaNTM-Consist}
\widetilde{\theta}_{N,M,T} \rightarrow \theta_0, \quad \mbox{in probability},
\end{equation}
as $N,M,T\to \infty$, or as $N,M\to \infty$ while $T$ is fixed, and assuming that (in both cases)
\begin{equation}\label{eq:MNT-Consist}
\frac{T^2N^{\frac{4\beta}{d}-1}}{M^2}\rightarrow 0.
\end{equation}
Moreover, if $4\beta<d$, then \eqref{eq:thetaNTM-Consist} holds true with both $T,M$ being fixed.
\end{theorem}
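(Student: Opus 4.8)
The plan is to argue directly from the decomposition \eqref{Dis-MLE-theta} and show that each of its two terms converges to zero in probability. The natural common normalizer is $a_{N,T}:=TN^{\frac{2\beta}{d}+1}$, and the whole argument reduces to three limits in probability: $I_{N,M,T}/a_{N,T}\to\sigma^2\Upsilon^2$, $Y_{N,M,T}/a_{N,T}\to 0$, and $V_{N,M,T}/a_{N,T}\to 0$. Once these are established, Slutzky's theorem applied to the ratios $\theta_0 V_{N,M,T}/I_{N,M,T}$ and $\sigma Y_{N,M,T}/I_{N,M,T}$, whose common denominator converges to the nonzero constant $\sigma^2\Upsilon^2$, finishes the proof.

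For the denominator I would write $I_{N,M,T}=(I_{N,M,T}-I_{N,T})+I_{N,T}$. The term $I_{N,T}/a_{N,T}\to\sigma^2\Upsilon^2$ almost surely by \eqref{eq:strongLLN-1} (in the regime $N,T\to\infty$) or by \eqref{eq:strongLLN-2} (when $T$ is fixed). For the discretization error, dividing the bound \eqref{eq:I-NMT} by $a_{N,T}^2=T^2N^{\frac{4\beta}{d}+2}$ gives $\mathbb{E}\bigl|(I_{N,M,T}-I_{N,T})/a_{N,T}\bigr|^2\le C\,T^2N^{\frac{4\beta}{d}-1}/M^2$, which tends to $0$ under \eqref{eq:MNT-Consist}; Chebyshev's inequality then yields convergence to $0$ in probability, and combining the two pieces gives $I_{N,M,T}/a_{N,T}\to\sigma^2\Upsilon^2$ in probability.

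The two numerators are handled in the same spirit. Splitting $Y_{N,M,T}=(Y_{N,M,T}-Y_{N,T})+Y_{N,T}$ and using $Y_{N,T}/a_{N,T}\to 0$ almost surely from \eqref{eq:strongLLN-1}/\eqref{eq:strongLLN-2}, the remaining error is controlled by dividing \eqref{eq:Y-NMT} by $a_{N,T}^2$, which produces a bound of order $1/(MN)$; this tends to $0$ as soon as $N\to\infty$, so no hypothesis beyond the stated regimes is needed for the martingale part. For $V_{N,M,T}$, dividing \eqref{eq:V-NMT} by $a_{N,T}^2$ again yields the bound $C\,T^2N^{\frac{4\beta}{d}-1}/M^2$, which vanishes precisely under \eqref{eq:MNT-Consist}. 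In the final assertion, where $4\beta<d$ with $M,T$ fixed, the exponent $\frac{4\beta}{d}-1$ is negative, so $T^2N^{\frac{4\beta}{d}-1}/M^2\to 0$ automatically as $N\to\infty$; hence \eqref{eq:MNT-Consist} holds for free and the identical computation applies.

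Since Lemma~\ref{lemma:Y-Difference} supplies all the needed moment bounds, there is no deep obstacle: the proof is essentially an exercise in matching those growth rates against the normalizer $a_{N,T}$. The one point requiring care is the bookkeeping of which regime invokes which convergence statement from the Remark, namely that the a.s.\ limits \eqref{eq:strongLLN-1} are used when $N,T\to\infty$ while \eqref{eq:strongLLN-2} covers the fixed-$T$ cases; and the observation that \eqref{eq:MNT-Consist} is exactly the quantity governing both the $I$-discretization error and the Riemann-sum bias $V_{N,M,T}$, whereas the stochastic-integral discretization $Y_{N,M,T}-Y_{N,T}$ is harmless under $N\to\infty$ alone.
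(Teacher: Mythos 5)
Your proposal is correct and follows essentially the same route as the paper's proof: the same decomposition \eqref{Dis-MLE-theta}, Chebyshev's inequality applied to the moment bounds \eqref{eq:Y-NMT}--\eqref{eq:V-NMT} of Lemma~\ref{lemma:Y-Difference} for the discretization errors, and the strong law statements \eqref{eq:strongLLN-1}/\eqref{eq:strongLLN-2} for the continuous-time parts, including the correct observation that $4\beta<d$ makes \eqref{eq:MNT-Consist} automatic with $M,T$ fixed. The only difference is presentational: you assemble the three normalized in-probability limits via Slutzky's theorem, whereas the paper combines the identical estimates directly into the explicit bound $\bar L\leq C(\varepsilon)\bigl(T^2N^{\frac{4\beta}{d}-1}/M^2+1/(NM)\bigr)$ on the deviation probability by means of the elementary inequality \eqref{eq:simpleOne}.
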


\begin{proof}
Let $\bar L := \mathbb{P}\left(\left|\widetilde{\theta}_{N,M,T}-\theta_0\right|>\varepsilon\right)$.  In view of \eqref{Dis-MLE-theta}, we note that
\begin{align*}
\bar L
&\leq \mathbb{P}\left(\left|\frac{\sigma Y_{N,M,T}}{I_{N,M,T}}\right|>\varepsilon/2\right)
+\mathbb{P}\left(\left|\frac{\theta_0V_{N,M,T}}{I_{N,M,T}}\right|>\varepsilon/2\right).
\end{align*}
Consequently, for an arbitrary fixed $\delta\in(0,\varepsilon/2)$, we have, using \eqref{eq:simpleOne},
\begin{align*}
\bar L
&\leq \mathbb{P}\left(\frac{\theta_0|V_{N,M,T}|}{\sigma^2\Upsilon^2 TN^{\frac{2\beta}{d}+1}}>\delta\right)
+\mathbb{P}\left(\frac{|Y_{N,M,T}|}{\sigma \Upsilon^2 TN^{\frac{2\beta}{d}+1}}>\delta\right) \\
&\qquad+2\mathbb{P}\left(\left|\frac{I_{N,M,T}}{\sigma^2\Upsilon^2TN^{\frac{2\beta}{d}+1}}-1\right|>\frac{\varepsilon-2\delta}{\varepsilon}\right)\\
& =: L_1 +L_2 +2L_3.
\end{align*}
By Chebyshev inequality\footnote{Throughout we will use the following version of the Chebyshev inequality: $\bP(|X|>a)\leq \bE(X^2)/a^2$, for $a>0$. }, and \eqref{eq:V-NMT}, we have that, for some constant (that may depend on $\delta$) $C_1(\delta)>0$,
$$
L_1 \leq C_1(\delta)\frac{T^2N^{\frac{4\beta}{d}-1}}{M^2}.
$$
As far as $L_2$, we write
\begin{align*}
L_2 \leq \mathbb{P}\left(
\frac{|Y_{N,M,T}-Y_{N,T}|}{\sigma \Upsilon^2 TN^{\frac{2\beta}{d}+1}}>\delta/2\right)+
\mathbb{P}\left(\frac{|Y_{N,T}|}{\sigma \Upsilon^2TN^{\frac{2\beta}{d}+1}}>\delta/2\right) =: L_{21}+L_{22}.
\end{align*}
Again by Chebyshev inequality, and using \eqref{eq:Y-NMT}, we get $L_{21} \leq C_2(\delta)/(NM)$, for some $C_2(\delta)>0$. On the other hand, by \eqref{eq:strongLLN-1}, $L_{22}\to 0$, as $N,T\to \infty$. Moreover, by  \eqref{eq:strongLLN-2} $L_{22}\to 0$, as $N\to \infty$ with $T$ being fixed.
We treat $L_3$ similarly:
\begin{align*}
L_3\leq \mathbb{P}\left(\frac{|I_{N,M,T}-I_{N,T}|}{\sigma^2 \Upsilon^2TN^{\frac{2\beta}{d}+1}}
>\frac{\varepsilon-2\delta}{2\varepsilon}\right)
+\mathbb{P}\left(\left|\frac{I_{N,T}}{\sigma^2\Upsilon^2 TN^{\frac{2\beta}{d}+1}}-1\right|>\frac{\varepsilon-2\delta}{2\varepsilon}\right) =: L_{31}+L_{32}.
\end{align*}
In view of  \eqref{eq:I-NMT}, and Chebyshev inequality, we have the bound
\begin{equation*}
L_{31}\leq C_3(\varepsilon) \frac{T^2N^{\frac{4\beta}{d}-1}}{M^2}.
\end{equation*}
By \eqref{eq:strongLLN-1}, and respectively \eqref{eq:strongLLN-2}, we get that $L_{32}\to0$, as $N,T\to \infty$, and, respectively, as $N\to \infty$ while $T$ fixed.
Hence, combining all the above bounds, we conclude that
\begin{equation*}
\bar L \leq C(\varepsilon)\left(\frac{T^2N^{\frac{4\beta}{d}-1}}{M^2} +
\frac{1}{NM}\right).
\end{equation*}
Clearly, $\bar L\to 0$ for every $\varepsilon>0$, as $N,T\rightarrow \infty$, or when $T$ fixed and as $N\to \infty$, given that \eqref{eq:MNT-Consist} is satisfied.
This concludes the proof.
\end{proof}

Next we prove an asymptotic  normality result for discretized MLE $\widetilde{\theta}_{N,M,T}$.  As one may expect, the rate of convergence of $\widetilde{\theta}_{N,M,T}$ agrees with those from  continuous time setup, and thus asymptotically is optimal in the mean-square sense.  As usual, we will denote by $\Phi$ the cumulative probability function of a standard Gaussian random variable.

\begin{theorem}\label{th:DiscrAsymNorm}
Assume that $\beta>1/2$ and $\gamma > d/2$. Then,
\begin{equation}\label{eq:DiscrtAsymNorm1}
\sup_{x\in \mathbb{R}}\left|
\mathbb{P}\left(\Upsilon \sqrt{T}N^{\frac{\beta}{d}+\frac{1}{2}}\left(\theta_0-\widetilde{\theta}_{N,M,T}\right)\leq x\right)-\Phi(x)
\right|\rightarrow 0,
\end{equation}
as $N,M,T\to \infty$,  or as $N,M\to \infty$ while $T$ is fixed, and such that (in both cases)
\begin{equation}\label{eq:MNTL}
\frac{T^3N^{6\beta/d}}{M^{2}}\rightarrow 0, \qquad \textrm{or} \qquad  \frac{TN^{2\beta/d}}{M}\rightarrow 0.
\end{equation}
\end{theorem}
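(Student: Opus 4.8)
The plan is to start from the exact decomposition \eqref{Dis-MLE-theta} and, after the correct scaling, isolate a single term carrying the Gaussian fluctuation together with two remainder terms that the growth conditions \eqref{eq:MNTL} force to vanish. Writing $\theta_0-\widetilde{\theta}_{N,M,T}=(\sigma Y_{N,M,T}-\theta_0 V_{N,M,T})/I_{N,M,T}$ and recalling from \eqref{asympSumVarxi} that $C_{N,T}^2\simeq\sigma^2\Upsilon^2 TN^{\frac{2\beta}{d}+1}$, I would factor
\[
\Upsilon\sqrt{T}N^{\frac{\beta}{d}+\frac{1}{2}}\bigl(\theta_0-\widetilde{\theta}_{N,M,T}\bigr)
=\frac{\sigma Y_{N,M,T}-\theta_0 V_{N,M,T}}{\sigma^{2}\Upsilon\sqrt{T}N^{\frac{\beta}{d}+\frac{1}{2}}}
\cdot\frac{\sigma^{2}\Upsilon^{2}TN^{\frac{2\beta}{d}+1}}{I_{N,M,T}},
\]
so that the second factor tends to $1$ in probability and Slutzky's theorem reduces the whole statement to the first factor.

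First I would dispose of the denominator. By \eqref{eq:strongLLN-1} (respectively \eqref{eq:strongLLN-2} when $T$ is fixed), $I_{N,T}/(\sigma^{2}\Upsilon^{2}TN^{\frac{2\beta}{d}+1})\to 1$ with probability one, while \eqref{eq:I-NMT} and Chebyshev's inequality give $\mathbb{E}|I_{N,M,T}-I_{N,T}|^{2}/(\sigma^{2}\Upsilon^{2}TN^{\frac{2\beta}{d}+1})^{2}\lesssim T^{2}N^{\frac{4\beta}{d}-1}/M^{2}$, which vanishes under the consistency condition \eqref{eq:MNT-Consist}; the latter is implied by either branch of \eqref{eq:MNTL}. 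Hence the second factor converges to $1$ in probability, in both asymptotic regimes.

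Next I would split the numerator as $\sigma Y_{N,M,T}-\theta_0 V_{N,M,T}=\sigma Y_{N,T}+\sigma(Y_{N,M,T}-Y_{N,T})-\theta_0 V_{N,M,T}$ and, after dividing by $\sigma^{2}\Upsilon\sqrt{T}N^{\frac{\beta}{d}+\frac{1}{2}}\simeq\sigma C_{N,T}$, show that the last two pieces are negligible while the first produces the limit. By \eqref{eq:Y-NMT} the normalized martingale-discretization error $\sigma(Y_{N,M,T}-Y_{N,T})/(\sigma^{2}\Upsilon\sqrt{T}N^{\frac{\beta}{d}+\frac{1}{2}})$ has second moment $\lesssim TN^{\frac{2\beta}{d}}/M$, and by \eqref{eq:V-NMT} the normalized bias term $\theta_0 V_{N,M,T}/(\sigma^{2}\Upsilon\sqrt{T}N^{\frac{\beta}{d}+\frac{1}{2}})$ has second moment $\lesssim T^{3}N^{\frac{6\beta}{d}}/M^{2}$; these are precisely the two requirements in \eqref{eq:MNTL}, so both pieces tend to $0$ in probability. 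The surviving piece equals $Y_{N,T}/\bigl(\sigma\Upsilon\sqrt{T}N^{\frac{\beta}{d}+\frac{1}{2}}\bigr)=(Y_{N,T}/C_{N,T})(1+o(1))$, which converges in distribution to $\cN(0,1)$: in the regime $N,T\to\infty$ this is exactly the central limit theorem \eqref{CLT-xikT} obtained in the proof of Theorem~\ref{th:NT1}, and in the regime $T$ fixed, $N\to\infty$ it is the classical spectral normality \eqref{eq:AsymNormN}. Collecting the three pieces via Slutzky's theorem yields $\Upsilon\sqrt{T}N^{\frac{\beta}{d}+\frac{1}{2}}(\theta_0-\widetilde{\theta}_{N,M,T})\overset{d}{\longrightarrow}\cN(0,1)$.

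Finally, \eqref{eq:DiscrtAsymNorm1} demands convergence uniformly in $x$, i.e. in Kolmogorov distance. Since the limiting distribution function $\Phi$ is continuous, convergence in distribution upgrades automatically to uniform convergence of the distribution functions (P\'olya's theorem), which closes the argument; alternatively one may combine the total-variation estimate for the Gaussian approximation of the leading term, in the spirit of Section~\ref{sec:MalCal}, with the boundedness of the Gaussian density to absorb the two remainders uniformly. I expect the delicate point to be the bias term $\theta_0 V_{N,M,T}/I_{N,M,T}$: unlike the zero-mean martingale term $Y_{N,M,T}$ it does not average out, and keeping it negligible after the aggressive scaling $\sqrt{T}N^{\frac{\beta}{d}+\frac{1}{2}}$ is exactly what dictates the growth restriction \eqref{eq:MNTL} and constitutes the crux of the proof.
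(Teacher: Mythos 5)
Your proposal is correct and follows essentially the same route as the paper's proof: the same decomposition \eqref{Dis-MLE-theta}, the same Chebyshev bounds via Lemma~\ref{lemma:Y-Difference} yielding exactly the error terms $TN^{2\beta/d}/M$ and $T^{3}N^{6\beta/d}/M^{2}$, and the same CLT input \eqref{CLT-xikT} (resp.\ \eqref{eq:AsymNormN} when $T$ is fixed). The only cosmetic difference is the final uniformity step: you invoke Slutzky plus P\'olya's theorem, whereas the paper works directly with the Kolmogorov distance throughout via the elementary inequalities \eqref{eq:simple2}--\eqref{eq:simple3}, which amounts to the same thing.
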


\begin{proof}
We denote the left hand side of \eqref{eq:DiscrtAsymNorm1} by $\bar K$, and using \eqref{Dis-MLE-theta}, we write it as
\begin{equation*} 
\bar K = \sup_{x\in \mathbb{R}}\left|
\mathbb{P}\left(\sigma \Upsilon \sqrt{T}N^{\frac{\beta}{d}+\frac{1}{2}}\frac{\sigma Y_{N,M,T} - \theta_0 V_{N,M,T}}{I_{N,M,T}}
 \leq x\right)-\Phi(x)\right|.
\end{equation*}
Using \eqref{eq:simple3}, we continue
\begin{align*}
\bar K & \leq
\sup_{x\in \mathbb{R}}\left|
\mathbb{P}\left(\frac{\sigma Y_{N,M,T} - \theta_0 V_{N,M,T}}{\sigma \Upsilon \sqrt{T}N^{\frac{\beta}{d}+\frac{1}{2}}}
 \leq x\right)-\Phi(x)\right| +
\mathbb{P}\left(\left|\frac{I_{N,M,T}}{\sigma^2 \Upsilon^2 TN^{\frac{2\beta}{d}+1}}-1\right|>\varepsilon \right)+\varepsilon  \\
& =: K_1 + K_2 +\varepsilon.
\end{align*}
Consequently, by applying \eqref{eq:simple2} , we obtain
\begin{align*}
  K_1 & \leq \sup_{x\in \mathbb{R}}\left| \mathbb{P}\left(\frac{Y_{N,T}}{\sigma \Upsilon \sqrt{T}N^{\frac{\beta}{d}+\frac{1}{2}}}  \leq x\right)-\Phi(x) \right|
+\mathbb{P}\left(\frac{|Y_{N,M,T}-Y_{N,T}|}{\sigma \Upsilon \sqrt{T}N^{\frac{\beta}{d}+\frac{1}{2}}}
 >\varepsilon \right) \\
 & \qquad +  \mathbb{P}\left(\frac{\theta_0|V_{N,M,T}|}{\sigma \Upsilon \sqrt{T}N^{\frac{\beta}{d}+\frac{1}{2}}}>\varepsilon \right) + 2\varepsilon
 =: K_{1,1} + K_{1,2} + K_{1,3} + 2\varepsilon.
\end{align*}
Note that  \eqref{CLT-xikT} implies that
\begin{equation*}
\w\lim_{N,T\to \infty} \frac{Y_{N,T}}
{\sigma\Upsilon\sqrt{T}N^{\frac{\beta}{d}+\frac{1}{2}}} =\mathcal{N}(0,1).
\end{equation*}
Moreover, using \eqref{eq:AsymNormN}, one can easily derive that  the above limit holds true for every fixed $T>0$ and $N\to\infty$.
Thus, $K_{1,1}\to 0$, as $N,T\rightarrow \infty$ (or as $N\to \infty$, when $T$ is fixed).
By Chebyshev inequality and by \eqref{eq:Y-NMT} and  \eqref{eq:V-NMT}, we deduce
\begin{equation*}
K_{1,2}  \leq C_1(\varepsilon) \frac{TN^{\frac{2\beta}{d}}}{M}, \qquad K_{1,3} \leq C_2(\varepsilon) \frac{T^{3} N^{\frac{6\beta}{d}}}{M^2},
\end{equation*}
for some $C_1(\varepsilon), C_2(\varepsilon)>0$.

Similarly,
\begin{align*}
  K_2 & \leq \mathbb{P}\left(\left|\frac{I_{N,T}}{\sigma^2\Upsilon^2TN^{\frac{2\beta}{d}+1}} - 1\right|>\frac{\varepsilon}{2} \right)
+\mathbb{P}\left(\frac{|I_{N,M,T}-I_{N,T}|}{\sigma^2\Upsilon^2TN^{\frac{2\beta}{d}+1}}
>\frac{\varepsilon}{2} \right)=: K_{2,1}+K_{2,2}.
\end{align*}
By \eqref{eq:strongLLN-1} or \eqref{eq:strongLLN-2}, $K_{2,1}\to0$, as $N,T\to\infty$,  or as $N\to\infty$ and $T$ fixed. On the other hand, by \eqref{eq:I-NMT}, $$
K_{2,1} \leq C_3(\varepsilon) \frac{T^2 N^{\frac{4\beta}{d}-1}}{M^2}.
$$
Combining all the above, we conclude
$$
\bar K \leq C_4(\varepsilon)\left(\frac{TN^{\frac{2\beta}{d}}}{M} + \frac{T^{3} N^{\frac{6\beta}{d}}}{M^2} + \frac{T^2 N^{\frac{4\beta}{d}-1}}{M^2}\right) \leq C_5(\varepsilon) \left(\frac{TN^{\frac{2\beta}{d}}}{M} + \frac{T^{3} N^{\frac{6\beta}{d}}}{M^2}\right) +3\varepsilon.
$$
Since $\varepsilon>0$ was chosen arbitrarily, and since $C_5(\varepsilon)$ is independent of $N,M,T$, and given that \eqref{eq:MNTL} is satisfied, we conclude that $\bar K\to 0$, as $N,M,T\to \infty$, or $N,M\to\infty$ and $T$ being fixed. The proof is complete.
\end{proof}

\appendix
\section{Proofs of technical lemmas}\label{Appendix:proofs}

\begin{proof}[Proof of Lemma~\ref{conv-variance}]
We start by computing the Malliavin derivative of  $\wh{F}_{N,T}$. If $r\le t$ and for $1\le k\le N$, then
\begin{align*}
D_{r,k}u_k(t) &= \sigma \lambda_k^{-\gamma} D_{r,k}  \int_0^t e^{-\theta_0 \lambda_k^{2\beta}(t-s)}\dif w_k(s)\\
&= \sigma \lambda_k^{-\gamma}   e^{-\theta_0 \lambda_k^{2\beta}(t-r)}.
\end{align*}
Moreover, one has that $D_{r,j}u_k(t)=0$ if $j\ne k$ or $r>t$.
Therefore, for $r\le T$ and $1\le j\le N$, we have by \eqref{chain-rule-formula},
\begin{align}
D_{r,j}\wh{F}_{N,T}&= -\frac{\si}{C_{N,T}^2}  \lambda_j^{2\beta+\gamma} u_j(r)- \frac{\si}{C_{N,T}^2}  \sum_{k=1}^N \lambda_k^{2\beta+\gamma} \int_r^{T} D_{r,j}u_k(t)\dif w_k(t) \nonumber\\
&= -\frac{\si}{C_{N,T}^2}  \lambda_j^{2\beta+\gamma} u_j(r)-\frac{\si}{C_{N,T}^2}   \lambda_j^{2\beta+\gamma} \int_r^{T} D_{r,j}u_j(t)\dif w_j(t) \nonumber\\
&= -\frac{\si}{C_{N,T}^2}  \lambda_j^{2\beta+\gamma} u_j(r)- \frac{\si^2}{C_{N,T}^2}   \lambda_j^{2\beta} \int_r^{T}  e^{-\theta_0 \lambda_j^{2\beta}(t-r)} \dif w_j(t). \label{est1}
\end{align}
We continue by setting
$$
A:=\left\| C_{N,T} D\wh{F}_{N,T}\right\|_{\mathcal{H}}^2 =  C_{N,T}^2\| D\wh{F}_{N,T}\|_{\mathcal{H}}^2,
$$
and in view of \eqref{est1}, we obtain
\begin{align*}
A&=C_{N,T}^2\int_0^T \sum_{k=1}^N \left[\frac{\si}{C_{N,T}^2}  \lambda_k^{2\beta+\gamma} u_k(r)+
\frac{\si^2}{C_{N,T}^2} \lambda_k^{2\beta} \int_r^{T}   e^{-\theta_0 \lambda_k^{2\beta} (t-r)} dw_k(t)\right]^2 \dif r \nonumber\\
&=  \int_0^{T}\sum_{k=1}^N \Bigg[\frac{\si^2}{C_{N,T}^2}
\lambda_k^{4\beta+2\gamma} u_k^2(r)
+  2\frac{\si^3}{C_{N,T}^2} \lambda_k^{4\beta+\gamma}  u_k(r)  \int_r^{T}   e^{-\theta_0 \lambda_k^{2\beta}(t-r)} \dif w_k(t)\\ \nonumber
&\qquad \qquad \qquad \qquad + \frac{\si^4}{C_{N,T}^2} \lambda_k^{4\beta}
\left(\int_r^{T}   e^{-\theta_0 \lambda_k^{2\beta} (t-r)} \dif w_k(t)\right)^2 \Bigg]\dif r \nonumber\\
&=:A_1+ A_2+A_3. 
\end{align*}
It is easy to see that  for any process $\Phi=\{\Phi(s), s\in [0,t]\}$ such that $\sqrt{\Var(\Phi(s))}$ is integrable on $[0,t]$,
it holds that
$\sqrt{ \Var  \left( \int_0^t \Phi_s \dif s \right)} \le \int_0^t  \sqrt{\Var (\Phi_s)} \dif s.$
Therefore, we have
\begin{align*}
\sqrt{ \Var \left(\frac{1}{2} \| C_{N,T} D\wh{F}_{N,T}\|_{\mathcal{H}}^2 \right) } &\le \frac{\sqrt{3}}{2}\left(  \sqrt{ \Var(A_1) } +  \sqrt{ \Var(A_2)  } +  \sqrt{\Var(A_3)  } \right) \\
&\leq  \frac{\sqrt{3}}{2}\left(B_1+B_2+B_3\right),
\end{align*}
where
\begin{align*}
B_1&:=\frac{\si^2}{C_{N,T}^2}\int_0^{T} \left[\Var\left( \sum_{k=1}^N \lambda_k^{4\beta+2\gamma} u_k^2(r)\right) \right]^{1/2} \dif r\\
B_2&:=2\frac{\si^3}{C_{N,T}^2}\int_0^{T} \left[\Var \left( \sum_{k=1}^N \lambda_k^{4\beta+\gamma}  u_k(r) \int_r^{T}   e^{-\theta_0 \lambda_k^{2\beta}(t-r)}
dw_k(t)\right) \right]^{1/2} \dif r\\
B_3&:= \frac{\si^4}{C_{N,T}^2} \int_0^{T} \left[\Var \left( \sum_{k=1}^N \lambda_k^{4\beta}   \left(\int_r^{T}   e^{-\theta_0 \lambda_k^{2\beta}(t-r)}
dw_k(t)\right)^2\right) \right]^{1/2} \dif r.
\end{align*}
Note that by \eqref{eq:2ndmomentuk} and \eqref{eq:4thmomentuk},
\begin{align}\label{var-uk2}
\Var\left(u_k^2(r)\right)  &=\E\left(u_k^4(r)\right)-\E^2\left(u_k^2(r)\right)
=\frac{\si^4 \lambda_k^{-4\beta-4\ga}}{2\theta_0^2} \left(1-e^{-2\theta_0 \lambda_k^{2\beta} r}\right)^2.
\end{align}
For $B_1$, by the independence of $\{u_k\}_{k\ge 1}$ and \eqref{var-uk2}, 
 \begin{align}
B_1&=\frac{\si^2}{C_{N,T}^2}\int_0^{T} \left(\sum_{k=1}^N \lambda_k^{8\beta+4\gamma} \Var\left(u_k^2(r)\right) \right)^{1/2}\dif r\nonumber\\
&=\frac{\si^4}{\sqrt{2}\theta_0C_{N,T}^2}\int_0^{T} \left(\sum_{k=1}^N \lambda_k^{4\beta}
\left(1-e^{-2\theta_0 \lambda_k^{2\beta} r}\right)^2 \right)^{1/2} \dif r \nonumber\\
&\simeq \frac{\si^4T}{\sqrt{2}\theta_0C_{N,T}^2}\left(\sum_{k=1}^N\lambda_k^{4\beta}\right)^{1/2}\quad \mbox{as}\ T\to \infty \nonumber\\
&\sim \frac{1}{N^{1/2}}\to 0, \quad \mbox{as}\ N,T \to \infty.\label{A1_conv}
\end{align}
For $B_2$, we note that $u_k$ and $W_l$ are independent if $k\neq l$.
Therefore, we rewrite $B_2$ as
\begin{align*}
B_2&=\frac{2\si^3}{C_{N,T}^2}\int_0^{T} \left[ \sum_{k=1}^n \lambda_k^{8\beta+2\gamma} \Var \left(u_k(r) \int_r^{T}   e^{-\theta_0 \lambda_k^{2\beta}(t-r)} \dif W_k(t) \right)\right]^{1/2} \dif r.
\end{align*}
By straightforward calculations, we have that
\begin{align*}
\Var \left(u_k(r) \int_r^{T}
e^{-\theta_0 \lambda_k^{2\beta} (t-r)}  \dif w_k(t) \right)
&\le \E\left[ u_k^2(r)\left[   \int_r^{T}   e^{-\theta_0 \lambda_k^{2\beta} (t-r)} \dif w_k(t)\right)^2 \right]\nonumber\\
& = \E\left[ u^2_k(r)\E\left[ \left(  \int_r^{T}   e^{-\theta_0 \lambda_k^{2\beta} (t-r)}  \dif w_k(t)\right)^2\Big| \cf_r \right] \right]\nonumber\\
&= \frac{\si^2 \lambda_k^{-2\beta-2\ga} }{2\theta_0} \left(1-e^{-2\theta_0 \lambda_k^{2\beta}r}\right)  \int_r^{T}   e^{-2\theta_0 \lambda_k^{2\beta}(t-r)} \dif t\nonumber\\
&\le \frac{\si^2\lambda_k^{-4\beta-2\ga} }{4\theta^2_0}.
\end{align*}
Therefore, we get
\begin{equation}
B_2\le \frac{\si^4}{\theta_0 C_{N,T}^2}\int_0^{T} \left( \sum_{k=1}^N \lambda_k^{4\beta} \right)^{1/2} \dif r \sim \frac{1}{N^{1/2}}\to 0, \quad \mbox{as }
N,T\to \infty.
\label{A2_conv}
\end{equation}
Let us now consider $B_3$. Since $w_k$ and $w_j$ are independents for $k\ne j$, we have
 \begin{align}
B_3&= \frac{\si^4}{C_{N,T}^2} \int_0^{T} \left[\Var \left( \sum_{k=1}^N \lambda_k^{4\beta}   \left(\int_r^{T}   e^{-\theta_0 \lambda_k^{2\beta}(t-r)}
\dif w_k(t)\right)^2\right) \right]^{1/2} \dif r\nonumber\\
&=   \frac{\si^4}{C_{N,T}^2} \int_0^{T} \left[ \sum_{k=1}^N \lambda_k^{8\beta} \Var \left(\int_r^{T}   e^{-\theta_0 \lambda_k^{2\beta}(t-r)}
\dif w_k(t)\right)^2 \right]^{1/2} \dif r\nonumber\\
&\le  \frac{\si^4}{C_{N,T}^2} \int_0^{T} \left[ \sum_{k=1}^N \lambda_k^{8\beta}  \E \left(\int_r^{T}   e^{-\theta_0 \lambda_k^{2\beta}(t-r)}
\dif w_k(t)\right)^4 \right]^{1/2} \dif r\nonumber\\
&= \frac{\si^4}{C_{N,T}^2} \int_0^{T} \left[3 \sum_{k=1}^N \lambda_k^{8\beta}  \left[ \E \left(\int_r^{T}   e^{-\theta_0 \lambda_k^{2\beta}(t-r)}
\dif w_k(t)\right)^2\right]^2 \right]^{1/2} \dif r\nonumber\\
&\leq \frac{\sqrt{3}\si^4}{2\theta_0 C_{N,T}^2} T \left( \sum_{k=1}^N \lambda_k^{4\beta}  \right)^{1/2}\sim \frac{1}{N^{1/2}} \rightarrow 0, \mbox{ as } N,T \rightarrow \infty. \label{A3_conv}
\end{align}
Finally, combining \eqref{A1_conv}, \eqref{A2_conv} and \eqref{A3_conv}, we have that for every $\varepsilon>0$,
there exist two independent constants $N_0,T_0>0$ such that for all $N\geq N_0$ and $T\geq T_0$,
$$
B_1+B_2+B_3  < \varepsilon.
$$
This completes the proof. \end{proof}


\begin{proof}[Proof of Lemma~\ref{lemma:Discr1}] Using \eqref{eq:u_k}, \eqref{eq:productuk} follows by direct evaluations.
As far as \eqref{eq:continuityuk} and \eqref{eq:bddnessuk}, since $u_k(t)-u_k(s)$ is a Gaussian random variable, it is enough to prove \eqref{eq:continuityuk} and \eqref{eq:bddnessuk} for $l=1$. We note that
for $t<s$,
\begin{align*}
\mathbb{E}|u_k(t)-u_k(s)|^2&=\sigma^2\lambda_k^{-2\gamma}
\mathbb{E}\left[\int_0^t e^{-\theta_0 \lambda_k^{2\beta}(t-r)} \dif w_k(r)-\int_0^s e^{-\theta_0 \lambda_k^{2\beta}(s-r)}\dif w_k(r)\right]^2\\
&=\sigma^2\lambda_k^{-2\gamma}\left[
\int_0^t e^{-2\theta_0 \lambda_k^{2\beta}(t-r)}\dif r - 2\int_0^t e^{-\theta_0 \lambda_k^{2\beta} (t+s-2r)}\dif r
+\int_0^s e^{-2\theta_0 \lambda_k^{2\beta}(s-r)}\dif r\right]\\
&=\frac{\sigma^2\lambda_k^{-2\gamma-2\beta}}{2\theta_0}
\left[2(1-e^{-\theta_0 \lambda_k^{2\beta}(s-t)}) + (e^{-\theta_0 \lambda_k^{2\beta} t}+e^{-\theta_0 \lambda_k^{2\beta}s})
(e^{-\theta_0 \lambda_k^{2\beta} s}-e^{-\theta_0 \lambda_k^{2\beta} t})\right]\\
&\leq C\sigma^2\lambda_k^{-2\gamma}|t-s|,
\end{align*}
for some $C>0$, and where in the last inequality we used the fact that $e^{-x}$ is Lipschitz continuous on $[0,\infty)$. Thus part \eqref{eq:continuityuk} is proved.
Finally, we have the estimates
\begin{align*}
\mathbb{E}|u_k(t)+u_k(s)|^2&=\sigma^2\lambda_k^{-2\gamma}
\mathbb{E}\left[\int_0^t e^{-\theta_0 \lambda_k^{2\beta}(t-r)} \dif w_k(r)+\int_0^s e^{-\theta_0 \lambda_k^{2\beta}(s-r)}\dif w_k(r)\right]^2\\
&=\sigma^2\lambda_k^{-2\gamma}\left[
\int_0^t e^{-2\theta_0 \lambda_k^{2\beta}(t-r)}\dif r +2\int_0^t e^{-\theta_0 \lambda_k^{2\beta} (t+s-2r)}\dif r
+\int_0^s e^{-2\theta_0 \lambda_k^{2\beta}(s-r)}\dif r\right]\\
&\leq C\sigma^2\lambda_k^{-2\gamma-2\beta}T,
\end{align*}
for some $C>0$, which implies \eqref{eq:bddnessuk}. The proof is complete.
\end{proof}

\begin{proof}[Proof of Lemma~\ref{lemma:Y-Difference}]
Since $u_k, \ k \geq 1$, are independent, and taking into account that
$$
\int_0^T u_k(t)\dif w_k(t)=\sum_{i=1}^M \int_{t_{i-1}}^{t_i}u_k(t)\dif w_k(t),
$$
we have that
\begin{align*}
\mathbb{E}|Y_{N,M,T}-Y_{N,T}|^2 &
=\mathbb{E}\left|\sum_{k=1}^N \lambda_k^{2\beta+\gamma} \left[\sum_{i=1}^M u_k(t_{i-1})\left(
w_k(t_i)-w_k(t_{i-1})\right)-\int_0^T u_k(t)\dif w_k(t)\right] \right|^2\\
&=\sum_{k=1}^N \lambda_k^{4\beta+2\gamma}\mathbb{E}
\left|\sum_{i=1}^M u_k(t_{i-1})\left( w_k(t_i)-w_k(t_{i-1})\right)-\int_0^T u_k(t)\dif w_k(t)\right|^2 \\
&=\sum_{k=1}^N \lambda_k^{4\beta+2\gamma} \mathbb{E}
\left|\sum_{i=1}^M \int_{t_{i-1}}^{t_i} \left(u_k(t_{i-1})-u_k(t)\right)\dif w_k(t) \right|^2\\
&=\sum_{k=1}^N \lambda_k^{4\beta+2\gamma}\sum_{i=1}^M \int_{t_{i-1}}^{t_i} \mathbb{E}\left(u_k(t_{i-1})-u_k(t)\right)^2\dif t
\end{align*}
and hence, by \eqref{eq:continuityuk}, there exist constants $C_1,C_2>0$, such that
\begin{align*}
\mathbb{E}|Y_{N,M,T}-Y_{N,T}|^2
&\leq C_1\sum_{k=1}^N \lambda_k^{4\beta}\sum_{i=1}^M \int_{t_{i-1}}^{t_i} |t_{i-1}-t|\dif t\\
&=C_1\left(\sum_{k=1}^N \lambda_k^{4\beta}\right)\frac{T^2}{2M}
\leq C_2\frac{T^2 N^{\frac{4\beta}{d}+1}}{M}.
\end{align*}
Hence, \eqref{eq:Y-NMT} follows at once.

Next we will prove \eqref{eq:I-NMT}. We note that
\begin{align*}
\mathbb{E}|I_{N,M,T}-I_{N,T}|^2&=
\mathbb{E}\left|\sum_{k=1}^N \lambda_k^{4\beta+2\gamma}\left(\sum_{i=1}^M u_k^2(t_{i-1})(t_i-t_{i-1})
-\int_0^T u_k^2(t)\dif t\right)\right|^2\\
&=\sum_{k=1}^N \lambda_k^{8\beta+4\gamma} \mathbb{E}\left|
\sum_{i=1}^M \int_{t_{i-1}}^{t_i} \left(u_k^2(t_{i-1})-u_k^2(t)\right)\dif t \right|^2.
\end{align*}
Consequently, letting $U_i(t):=u_k^2(t_{i-1})-u_k^2(t), \ k\geq 1$, we continue
\begin{align*}
\mathbb{E}|I_{N,M,T}-I_{N,T}|^2
&=\sum_{k=1}^N \lambda_k^{8\beta+4\gamma} \sum_{i=1}^M \mathbb{E}\left|
 \int_{t_{i-1}}^{t_i} U_i(t)\dif t
\right|^2+2\sum_{k=1}^N \lambda_k^{8\beta+4\gamma} \sum_{i<j}\mathbb{E}
 \int_{t_{i-1}}^{t_i}\int_{t_{j-1}}^{t_j} U_i(t)U_j(s) \dif s\dif t\\
&=: I_1+I_2.
\end{align*}
Note that by Cauchy-Schwartz inequality,
\begin{align*}
\mathbb{E}|U_i^2(t)|&=\mathbb{E}|u_k^2(t_{i-1})-u_k^2(t)|^2
=\mathbb{E}|u_k(t_{i-1})-u_k(t)|^2|u_k(t_{i-1})+u_k(t)|^2\\
&\leq \left(\mathbb{E}|u_k(t_{i-1})-u_k(t)|^4\right)^{1/2}
\left(\mathbb{E}|u_k(t_{i-1})+u_k(t)|^4\right)^{1/2}.
\end{align*}
Moreover, by \eqref{eq:continuityuk} and \eqref{eq:bddnessuk},
\begin{equation}
\label{Usquare}
\mathbb{E}|U_i^2(t)|\leq c_1\lambda_k^{-4\gamma-2\beta}T|t-t_{i-1}|,\quad \mbox{for some}\ c_1>0.
\end{equation}
Again by Cauchy-Schwartz inequality and \eqref{Usquare}, we have that
\begin{align*}
I_1&=\sum_{k=1}^N \lambda_k^{8\beta+4\gamma} \sum_{i=1}^M \mathbb{E}\left|  \int_{t_{i-1}}^{t_i} U_i(t)\dif t \right|^2
\leq \sum_{k=1}^N \lambda_k^{8\beta+4\gamma} \sum_{i=1}^M  (t_i-t_{i-1})
\int_{t_{i-1}}^{t_i} \mathbb{E}|U_i^2(t)|\dif t\\
&\leq c_1T\sum_{k=1}^N \lambda_k^{6\beta} \sum_{i=1}^M  (t_i-t_{i-1})
\int_{t_{i-1}}^{t_i} (t-t_{i-1})\dif t
=c_1\sum_{k=1}^N \lambda_k^{6\beta} \frac{T^4}{2M^2}.
\end{align*}
Turning to $I_2$, we first notice that
\begin{align*}
\mathbb{E}|U_i(t)U_j(s)|&=\mathbb{E}\Big[\left(u_k^2(t_{i-1})-u_k^2(t) \right)
\left(u_k^2(t_{j-1})-u_k^2(s) \right)\Big]\\
&=\mathbb{E}\Big[\left(u_k(t_{i-1})-u_k(t)\right)
\left(u_k(t_{i-1})+u_k(t)\right)\left(u_k(t_{j-1})-u_k(s)\right)\left(u_k(t_{j-1})+u_k(s)\right)\Big]\\
&=\mathbb{E}\Big[\left(u_k(t_{i-1})-u_k(t)\right)\left(u_k(t_{j-1})-u_k(s)\right)u_k(t_{i-1})u_k(t_{j-1})\Big]\\
&+\mathbb{E}\Big[\left(u_k(t_{i-1})-u_k(t)\right)\left(u_k(t_{j-1})-u_k(s)\right)(u_k(t_{i-1})u_k(s)\Big]\\
&+\mathbb{E}\Big[\left(u_k(t_{i-1})-u_k(t)\right)\left(u_k(t_{j-1})-u_k(s)\right)u_k(t)u_k(t_{j-1})\Big]\\
&+\mathbb{E}\Big[\left(u_k(t_{i-1})-u_k(t)\right)\left(u_k(t_{j-1})-u_k(s)\right)u_k(t)u_k(s)\Big].
\end{align*}
By the Wick's Lemma \cite[Lemma 3.1]{Bishwal2008}, we continue
\begin{align*}
\mathbb{E}|U_i(t)U_j(s)|
&=\mathbb{E}\left[\left(u_k(t_{i-1})-u_k(t)\right)
\left(u_k(t_{i-1})+u_k(t)\right)\right]\mathbb{E}\left[\left(u_k(t_{j-1})-u_k(s)\right)\left(u_k(t_{j-1})+u_k(s)\right)\right]\\
&+\mathbb{E}\left[\left(u_k(t_{i-1})-u_k(t)\right)\left(u_k(t_{j-1})-u_k(s)\right)
\right]\mathbb{E}\left[\left(u_k(t_{i-1})+u_k(t)\right)\left(u_k(t_{j-1})+u_k(s)\right)\right]\\
&+\mathbb{E}\left[\left(u_k(t_{i-1})-u_k(t)\right)\left(u_k(t_{j-1})+u_k(s)\right)
\right]\mathbb{E}\left[\left(u_k(t_{i-1})+u_k(t)\right)\left(u_k(t_{j-1})-u_k(s)\right)\right]\\
&=:J_1+J_2+J_3.
\end{align*}
For $J_2$, we have
\begin{align*}
\mathbb{E}\left(u_k(t_{i-1})-u_k(t)\right)\left(u_k(t_{j-1})-u_k(s)\right)&=
\mathbb{E}\left(u_k(t_{i-1})u_k(t_{j-1})\right)
-\mathbb{E}\left(u_k(t_{i-1})u_k(s)\right)\\
&-\mathbb{E}\left(u_k(t)u_k(t_{j-1})\right)
+\mathbb{E}\left(u_k(t)u_k(s)\right).
\end{align*}
By \eqref{eq:productuk}, for $i<j$ and $t<s$,
\begin{align*}
&\mathbb{E}\left(u_k(t_{i-1})-u_k(t)\right)\left(u_k(t_{j-1})-u_k(s)\right)=
\frac{\sigma^2\lambda_k^{-2\gamma-2\beta}}{2\theta_0}
\Bigg[e^{-\theta_0 \lambda_k^{2\beta}(t_{j-1}-t_{i-1})}-e^{-\theta_0 \lambda_k^{2\beta}(t_{j-1}+t_{i-1})}\nonumber \\
& \ -e^{-\theta_0 \lambda_k^{2\beta}(s-t_{i-1})}+e^{-\theta_0 \lambda_k^{2\beta}(t_{i-1}+s)} -e^{-\theta_0 \lambda_k^{2\beta}(t_{j-1}-t)}+e^{-\theta_0 \lambda_k^{2\beta}(t_{j-1}+t)} +e^{-\theta_0 \lambda_k^{2\beta}(s-t)}-e^{-\theta_0 \lambda_k^{2\beta}(s+t)} \Bigg] \nonumber\\
&= \frac{\sigma^2\lambda_k^{-2\gamma-2\beta}}{2\theta_0}
\Bigg[\left(e^{-\theta_0 \lambda_k^{2\beta}(t_{j-1}-t_{i-1})} -e^{-\theta_0 \lambda_k^{2\beta}(t_{j-1}-t)}\right)
+\left(e^{-\theta_0 \lambda_k^{2\beta}(t_{j-1}+t)} -e^{-\theta_0 \lambda_k^{2\beta}(t_{j-1}+t_{i-1})}\right) \nonumber\\
& \ +\left(e^{-\theta_0 \lambda_k^{2\beta}(s-t)} -e^{-\theta_0 \lambda_k^{2\beta}(s-t_{i-1})}\right)+
\left(e^{-\theta_0 \lambda_k^{2\beta}(t_{i-1}+s)} -e^{-\theta_0 \lambda_k^{2\beta}(s+t)}\right) \Bigg] \nonumber \\
&\leq c_2 \lambda_k^{-2\gamma}(t-t_{i-1}),
\end{align*}
for some $c_2>0$. By similar arguments, we also obtain
$$
\mathbb{E}\left(u_k(t_{i-1})+u_k(t)\right)\left(u_k(t_{j-1})+u_k(s)\right)\leq
c_3\lambda_k^{-4\gamma}(s-t_{j-1}),
$$
for some $c_3>0$. Thus,
$$
J_2 \leq c_4\lambda_k^{-4\gamma}(t-t_{i-1})(s-t_{j-1}),
$$
for some $c_4>0$. By analogy, one can treat $J_1$ and $J_3$, and derive the following upper bounds:
$$
J_1\leq c_5\lambda_k^{-4\gamma}(t-t_{i-1})(s-t_{j-1}), \qquad
J_3 \leq c_6\lambda_k^{-4\gamma}(t-t_{i-1})(s-t_{j-1}),
$$
for some $c_5,c_6>0$. Finally, combining the above, we have
\begin{align*}
I_2&\leq c_7\sum_{k=1}^N \lambda_k^{8\beta} \sum_{i<j} \int_{t_{j-1}}^{t_j}\int_{t_{i-1}}^{t_i}(t-t_{i-1})(s-t_{j-1})\dif t \dif s \\
&\leq c_8\sum_{k=1}^N \lambda_k^{8\beta} \frac{T^4}{M^2},
\quad \mbox{for some}\ c_8,c_9>0.
\end{align*}
Thus, using the estimates for $I_1,I_2$, and the fact that $\lambda_k\sim k^{1/d}$, we conclude that
$$
I_1+I_2 \leq  c_9\sum_{k=1}^N \lambda_k^{8\beta} \frac{T^4}{M^2} \leq C_2 \frac{T^4 N^{\frac{8\beta}{d}+1}}{M^2},
$$
and hence \eqref{eq:I-NMT} is proved.
The estimate \eqref{eq:V-NMT} is proved by similar arguments, and we omit the details here.
This completes the proof.
\end{proof}

\subsection{Auxiliary results}
For reader's convenience, we present here some simple, or well-known, results from probability.
Let  $X,Y,Z$ be random variables, and assume that $Z>0$ a.s.. For any $\varepsilon>0$ and $\delta\in(0,\varepsilon/2)$, the following inequalities hold true.
\begin{align}
& \bP(|Y/Z|>\varepsilon) \leq \bP(|Y|>\delta) + \bP(|Z-1|> (\varepsilon-\delta)/\varepsilon), \label{eq:simpleOne}\\
&  \sup_{x\in\bR} \Big|  \bP(X+Y\leq x) - \Phi(x)\Big|   \leq \sup_{x\in\bR} \Big|  \bP(X+Y\leq x) - \Phi(x)\Big| + \bP(|Y|>\varepsilon) + \varepsilon, \label{eq:simple2} \\
&  \sup_{x\in\bR} \Big|  \bP(Y/Z\leq x) - \Phi(x)\Big|   \leq \sup_{x\in\bR} \Big|  \bP(Y\leq x) - \Phi(x)\Big| + \bP(|Z-1|>\varepsilon) + \varepsilon,  \label{eq:simple3}
\end{align}
where $\Phi$ denotes the probability function of a standard Gaussian random variable.

\section{Elements of Malliavin calculus}\label{apendix:MalCal}
In this section, we recall some facts from Malliavin calculus associated with a Gaussian process, that we use in Section~\ref{sec:MalCal}. For more details, we refer to~\cite{Nualart2006}. Toward this end, let $T>0$ be given. We consider the space $\mathcal{H}=L^2\left([0,T]\times \mathcal{M}\right)$, where $\mathcal{M}$ is the counting measure on $\bN$, namely, for $v\in\ch$,
$$
v(t)=\sum_{k=1}^\infty v_k(t).
$$
We endow $\cH$ with the inner product and the norm
\begin{equation}\label{innerprod-1}
\langle u,v\rangle_{\mathcal{H}}:= \sum_{k=1}^\infty\int_0^T u_k(t) v_k(t) \dif t,\quad
\mbox{and}\quad
\|v\|_{\mathcal{H}}:= \sqrt{\langle v,v\rangle_{\mathcal{H}}}, \quad \ u,v\in \ch.
\end{equation}
We fix an isonormal Gaussian process $W=\{W(h)\}_{h\in\mathcal{H}}$ on $\mathcal{H}$, defined on a suitable probability space $(\Omega, \sF,\mathbb{P})$, such that $\sF=\sigma(W)$ is the $\sigma$-algebra generated by $W$.
Denote by $C_p^{\infty}(\RR^n)$, the space of all smooth functions on $\bR^n$ with at most polynomial growth partial derivatives.
Let $\mathcal{S}$ be the space of simple functionals of the form
\begin{equation*}
F = f(W(h_1), \dots, W(h_n)),\quad
f\in C_p^{\infty}(\RR^n), \  h_i \in \ch,\ 1\leq i \leq n.
\end{equation*}
As usual, we define the Malliavin derivative $D$ on $\mathcal{S}$ by
\begin{equation}\label{def-MD}
DF=\sum_{i=1}^n  \frac {\partial f} {\partial x_i} (W(h_1), \dots, W(h_n)) h_i, \quad F\in \mathcal{S}.
\end{equation}
We note that the derivative operator $D$ is a closable operator from $L^p(\Omega)$ into $L^p(\Omega;  \ch)$,  for any $p \geq1$.
Let $\mathbb{D}^{1,p}$, $p \ge 1$, be the completion of $\mathcal{S}$
with respect to the norm
$$
\|F\|_{1,p} = \left(\E\big[ |F|^p \big] + \E\big[  \|D F\|^p_\ch \big] \right)^{1/p}.
$$
Also, for $F$ of the form
$$
F=f\left(W\left(\1_{[0,t_1]}\right),\dots,W\left(\1_{[0,t_n]}\right)\right),
\quad t_1,\dots,t_n\in [0,T],
$$
we define the Malliavin derivative of $F$ at the point $t$ as
$$
D_t F=\sum_{i=1}^n \frac{\partial f}{\partial x_i}\left(W\left(\1_{[0,t_1]}\right),\dots,W\left(\1_{[0,t_n]}\right)\right)
\1_{[0,t_i]}(t),\quad t\in [0,T],
$$
where $\1_{A}$ denotes the indicator function of set $A$. For simplicity,
from now on, we define $W(t):=W\left(\1_{[0,t]}\right)$, $t\in [0,T]$, to represent a standard Brownian motion on $[0,T]$.
If $\sF$ is generated by  a collection of independent standard Brownian motions $\{W_k,\ k\geq 1\}$ on $[0,T]$, we define the Malliavin derivative of $F$ at the point $t$ by
\begin{equation}\label{def-MD-tj}
D_tF:=\sum_{k=1}^{\infty}D_{t,k}F:=\sum_{k=1}^{\infty}\sum_{i=1}^n \frac{\partial f}{\partial x_i}\left(W_k(t_1),\dots,W_k(t_n)\right)
\1_{[0,t_i]}(t),\quad t\in [0,T].
\end{equation}
Next, we denote by $\boldsymbol{\delta}$, the adjoint of the Malliavin derivative $D$ (as defined in
\eqref{def-MD})
given by the duality formula
\begin{equation*}
\E\left(\boldsymbol{\delta}(v) F\right) = \E\left(\langle v, DF \rangle_\ch\right),
\end{equation*}
for $F \in \mathbb{D}^{1,2}$ and $v\in \mathcal{D}(\boldsymbol{\delta})$, where $\mathcal{D}(\boldsymbol{\delta})$ is the domain of $\boldsymbol{\delta}$. If $v\in  L^2(\Omega;\ch)\cap \mathcal{D}(\boldsymbol{\delta})$ is a square integrable process, then the adjoint $\boldsymbol{\delta}(v)$ is called the Skorokhod integral of the process $v$
(cf. \cite{Nualart2006}), and it can be written as
\begin{align*}
 \boldsymbol{\delta}(v)=\int_0^T v(t) \dif W(t).
\end{align*}

\begin{proposition}\cite[Theorem 1.3.8]{Nualart2006} \label{chain-rule-Malliavin}
Suppose that $v \in L^2(\Omega;\ch)$ is a square integrable process such that $v(t)\in \mathbb{D}^{1,2}$ for almost all $t\in [0,T]$.
Assume that the two parameter process $\{D_tv(s)\}$ is square integrable in $L^2\left([0,T]\times \Omega;\ch\right)$.
Then, $\boldsymbol{\delta}(v) \in \mathbb{D}^{1,2}$ and
\begin{align}\label{chain-rule-formula}
D_t \left(\boldsymbol{\delta}(v)\right) = v(t) + \int_0^{T} D_tv(s) \dif W(s),\quad t\in [0,T].
\end{align}
\end{proposition}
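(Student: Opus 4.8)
The plan is to establish the commutation formula first on a dense class of \emph{elementary} processes, where both $\boldsymbol{\delta}$ and $D$ can be computed explicitly, and then to extend it to the general case by a closability argument. First I would take $v$ of the elementary form $v=\sum_{j=1}^n F_j h_j$ with $F_j\in\mathcal{S}$ and $h_j\in\ch$. For such $v$ the Skorokhod integral admits the closed expression
\begin{equation*}
\boldsymbol{\delta}(v)=\sum_{j=1}^n F_j\, W(h_j)-\sum_{j=1}^n \langle DF_j,h_j\rangle_{\ch},
\end{equation*}
which follows directly from the duality formula defining $\boldsymbol{\delta}$ together with the product rule for $D$.

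Next I would apply $D_t$ to this expression. Using the Leibniz rule for the Malliavin derivative and the identity $D_t\bigl(W(h_j)\bigr)=h_j(t)$, one obtains
\begin{equation*}
D_t\boldsymbol{\delta}(v)=\sum_{j=1}^n F_j\, h_j(t)+\sum_{j=1}^n \bigl(D_tF_j\bigr)W(h_j)-\sum_{j=1}^n D_t\langle DF_j,h_j\rangle_{\ch},
\end{equation*}
and the first sum is precisely $v(t)$. Since $D_tv(s)=\sum_j (D_tF_j)\,h_j(s)$ is again elementary, a second application of the closed expression for $\boldsymbol{\delta}$ gives
\begin{equation*}
\int_0^T D_tv(s)\dif W(s)=\sum_{j=1}^n (D_tF_j)\,W(h_j)-\sum_{j=1}^n \langle D(D_tF_j),h_j\rangle_{\ch}.
\end{equation*}
Comparing the two displays reduces the whole identity to the relation $\sum_j D_t\langle DF_j,h_j\rangle_{\ch}=\sum_j \langle D(D_tF_j),h_j\rangle_{\ch}$, which, after writing each inner product as an integral in the $s$-variable, is exactly the symmetry of the iterated Malliavin derivative $D_tD_sF_j=D_sD_tF_j$. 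This proves the formula for elementary $v$.

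Finally, for a general $v$ satisfying the hypotheses I would approximate it by a sequence of elementary processes $v_n$ with $v_n\to v$ in $L^2(\Omega;\ch)$ and $Dv_n\to Dv$ in $L^2([0,T]\times\Omega;\ch)$. The crux of this step is an a priori estimate of the form
\begin{equation*}
\E\bigl[\|D\boldsymbol{\delta}(v_n)\|_{\ch}^2\bigr]\le C\,\E\bigl[\|v_n\|_{\ch}^2\bigr]+C\int_0^T\E\bigl[\|D_tv_n\|_{\ch}^2\bigr]\dif t,
\end{equation*}
which one reads off from the elementary identity by squaring, taking expectations, and using the orthogonality built into $\boldsymbol{\delta}$. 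Together with the analogous $L^2$ bound for $\boldsymbol{\delta}(v_n)$ itself, this shows that $\{\boldsymbol{\delta}(v_n)\}$ is Cauchy in $\mathbb{D}^{1,2}$; since $D$ is closed, the limit is $\boldsymbol{\delta}(v)\in\mathbb{D}^{1,2}$, and passing to the limit in the elementary identity yields the stated formula. I expect this last extension to be the main obstacle: one must arrange simultaneous convergence of the approximants in both the process norm and the derivative norm and verify that the a priori estimate controls $D\boldsymbol{\delta}(v_n)$ uniformly, whereas the symmetry argument on elementary processes is purely algebraic and routine.
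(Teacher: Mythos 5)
The paper offers no proof of this proposition: it is quoted directly from Nualart \cite[Proposition~1.3.8]{Nualart2006}, where the argument runs through the Wiener chaos expansion of $v$. The first half of your plan is correct and standard: for elementary $v=\sum_{j=1}^n F_jh_j$ the explicit formula $\boldsymbol{\delta}(v)=\sum_j F_j W(h_j)-\sum_j\langle DF_j,h_j\rangle_{\ch}$, the Leibniz rule, $D_t(W(h_j))=h_j(t)$, and the symmetry $D_tD_sF_j=D_sD_tF_j$ do yield \eqref{chain-rule-formula}; this is precisely the computation Nualart performs for simple processes.

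The genuine gap is in the extension step. The a priori estimate $\E\bigl[\|D\boldsymbol{\delta}(v_n)\|_{\ch}^2\bigr]\le C\,\E\bigl[\|v_n\|_{\ch}^2\bigr]+C\int_0^T\E\bigl[\|D_tv_n\|_{\ch}^2\bigr]\dif t$ is false. Squaring the elementary identity produces, besides terms controlled by your hypotheses, the contribution $\int_0^T\E\bigl[\boldsymbol{\delta}(D_tv_n)^2\bigr]\dif t$, and the isometry for the divergence gives $\E\bigl[\boldsymbol{\delta}(D_tv)^2\bigr]=\E\|D_tv\|_{\ch}^2+\E\int_0^T\!\!\int_0^T D_s\bigl(D_tv(r)\bigr)\,D_r\bigl(D_tv(s)\bigr)\dif r\dif s$, which involves the \emph{second} Malliavin derivative $D^2v$ --- a quantity your hypotheses do not control. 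Concretely (one Brownian motion suffices), take $v(t)=\sum_{n\ge 1}\bI_n\bigl(f_n(\cdot,t)\bigr)$ with $f_n\in L^2([0,T]^{n+1})$ symmetric in all $n+1$ variables and $\|f_n\|^2=\bigl((n+1)!\,(n+1)\log^2(n+1)\bigr)^{-1}$: then $\E\|v\|_{\ch}^2=\sum_n n!\|f_n\|^2<\infty$ and $\E\|Dv\|^2\sim\sum_n n\cdot n!\|f_n\|^2<\infty$, so your hypotheses hold, yet $\boldsymbol{\delta}(v)=\sum_n\bI_{n+1}(f_n)$ is \emph{not} in $\mathbb{D}^{1,2}$, since $\sum_n(n+1)\,(n+1)!\|f_n\|^2=\sum_n\log^{-2}(n+1)=\infty$. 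Hence $v\mapsto\boldsymbol{\delta}(v)$ is not bounded from the class defined by your hypotheses into $\mathbb{D}^{1,2}$, the sequence $\boldsymbol{\delta}(v_n)$ need not be Cauchy in $\mathbb{D}^{1,2}$, and the closability argument collapses; the counterexample even shows the conclusion cannot hold under these hypotheses alone. This is exactly why Nualart's statement carries an additional assumption (compressed away in the paper's restatement): that $\{D_tv(s)\}_s$ is Skorokhod integrable for a.e.\ $t$ and that $\bigl\{\int_0^T D_tv(s)\dif W(s)\bigr\}_t$ admits a version in $L^2([0,T]\times\Omega)$ --- the very quantity your estimate purports to control for free. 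His proof then checks convergence of the chaos series of $D_t\boldsymbol{\delta}(v)$ in $L^2$ and concludes by closedness of $D$. To repair your scheme you must either strengthen the hypotheses to two square-integrable derivatives (so the correct estimate, including an $\E\|D^2v_n\|^2$ term, applies) and thereby prove a weaker statement, or abandon the density argument in favor of the chaos-expansion computation under the full hypothesis.
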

Next, we present a connection between Malliavin calculus and Stein's method. For symmetric functions $f\in  L^2\big([0,T]^q\big)$, $q\geq 1$, let us define the following multiple integral of order $q$ 
\begin{equation*}
 \bI_q(f)=q!\int_0^T \dif W(t_1)\int_0^{t_1} \dif W(t_2)\cdots \int_0^{t_{q-1}} \dif W(t_q) f(t_1,\ldots,t_q),
\end{equation*}
with $0<t_1<t_2<\cdots<t_q<T$. Note that $\bI_q(f)$ is also called the $q$-th Wiener chaos \cite[Theorem 2.7.7]{NourdinPeccati2012}.
Denote by $d_{TV}(F,G)$, the total variation of two random variables $F$ and $G$.
\begin{theorem} \cite[Corollary 5.2.8]{NourdinPeccati2012} \label{Equiv}
Let $F_N=\bI_q(f_N)$, $N\ge 1$, be a sequence of random variables
for some fixed integer $q\ge 2$.
Assume that $\E\left(F_N^2\right)\rightarrow \sigma^2>0$, as $N\rightarrow\infty$.
Then, as $N \rightarrow\infty$, the following assertions are equivalent:
\begin{enumerate}
\item $F_N\overset{d}\longrightarrow \cn:=\cN(0,\sigma^2)$;
\item $d_{TV}\left(F_N,\cn\right)\longrightarrow 0$.
\end{enumerate}
\end{theorem}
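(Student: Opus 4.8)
The plan is to establish the nontrivial implication (1)$\Rightarrow$(2); the reverse implication (2)$\Rightarrow$(1) is immediate, since convergence in total variation always entails convergence in distribution. The engine is the Malliavin--Stein method, which converts the purely qualitative hypothesis $F_N\overset{d}{\longrightarrow}\cn$ into a quantitative decay of $d_{TV}(F_N,\cn)$ by combining Stein's characterization of the Gaussian law with the integration-by-parts formula of Malliavin calculus.

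First I would recall the Malliavin--Stein inequality. For any centered $F\in\mathbb{D}^{1,2}$, writing $L^{-1}$ for the pseudo-inverse of the Ornstein--Uhlenbeck generator, one has
\begin{equation*}
d_{TV}\left(F,\cn\right)\leq \frac{2}{\sigma^2}\,\E\left|\sigma^2-\left\langle DF,-DL^{-1}F\right\rangle_{\cH}\right|.
\end{equation*}
This is obtained by solving Stein's equation $f'(x)-\sigma^{-2}xf(x)=\1_{(-\infty,z]}(x)-\Phi(x/\sigma)$, evaluating at $F$, and rewriting $\E[Ff(F)]$ through the identity $F=\boldsymbol{\delta}(-DL^{-1}F)$ together with the $D$--$\boldsymbol{\delta}$ duality, which yields $\E[Ff(F)]=\E[f'(F)\langle DF,-DL^{-1}F\rangle_{\cH}]$. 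Since $F_N=\bI_q(f_N)$ belongs to the $q$-th chaos, the generator acts diagonally, $L^{-1}F_N=-q^{-1}F_N$, so that $\langle DF_N,-DL^{-1}F_N\rangle_{\cH}=q^{-1}\|DF_N\|_{\cH}^2$. Using the elementary identity $\E[q^{-1}\|DF_N\|_{\cH}^2]=\E[F_N^2]\to\sigma^2$ together with the triangle and Cauchy--Schwarz inequalities, the bound reduces to
\begin{equation*}
d_{TV}\left(F_N,\cn\right)\leq \frac{2}{\sigma^2}\left(\left|\sigma^2-\E[F_N^2]\right|+\sqrt{\Var\left(\tfrac{1}{q}\|DF_N\|_{\cH}^2\right)}\right).
\end{equation*}
It therefore suffices to show that hypothesis (1) forces $\Var(q^{-1}\|DF_N\|_{\cH}^2)\to 0$.

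This last step is the fourth moment theorem of Nualart and Peccati. Invoking the product formula for multiple Wiener--It\^o integrals, one expands both the fourth cumulant $\E[F_N^4]-3(\E[F_N^2])^2$ and the variance $\Var(q^{-1}\|DF_N\|_{\cH}^2)$ as nonnegative linear combinations of the squared contraction norms $\|f_N\otimes_r f_N\|^2$, $r=1,\dots,q-1$; since the same contractions control both quantities, they tend to zero together. It then remains to deduce $\E[F_N^4]\to 3\sigma^4$ from (1). Convergence in distribution upgrades to convergence of the fourth moment once $\{F_N^4\}$ is uniformly integrable, and this is precisely where hypercontractivity on a fixed Wiener chaos enters: it renders all $L^p(\Omega)$ norms of $F_N$ comparable to $(\E[F_N^2])^{1/2}$, yielding $\sup_N\E[F_N^{8}]<\infty$, hence uniform integrability. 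As the limit $\cn=\cN(0,\sigma^2)$ has fourth moment $3\sigma^4$, we obtain $\E[F_N^4]\to 3\sigma^4$, whence the contractions vanish and the chain closes.

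The main obstacle is the fourth moment theorem itself: the combinatorial verification that the fourth cumulant of $F_N$ and the variance of $q^{-1}\|DF_N\|_{\cH}^2$ are simultaneously governed by the contraction norms $\|f_N\otimes_r f_N\|$, together with the uniform-integrability argument (via hypercontractivity) needed to promote convergence in distribution to convergence of the fourth moment. The remaining pieces---the Stein inequality and the diagonal action of $L^{-1}$ on the $q$-th chaos---are routine once these two ingredients are in place.
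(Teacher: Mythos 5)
This statement is imported verbatim from \cite[Corollary 5.2.8]{NourdinPeccati2012} and the paper gives no proof of its own (it is used as a black box, together with Proposition~\ref{Th2.2}, in Section~\ref{sec:MalCal}), so the only comparison available is against the cited source; your sketch correctly reconstructs exactly that argument. All the ingredients are in order: the trivial implication from total variation to distributional convergence, the Stein--Malliavin bound $d_{TV}(F_N,\cn)\leq \frac{2}{\sigma^2}\left(\left|\sigma^2-\E[F_N^2]\right|+\sqrt{\Var\left(\frac{1}{q}\|DF_N\|_{\cH}^2\right)}\right)$ with the correction term accounting for $\E[F_N^2]\neq\sigma^2$ (needed here since Proposition~\ref{Th2.2} assumes exact variance $\sigma^2$), the hypercontractivity/uniform-integrability step upgrading convergence in distribution to $\E[F_N^4]\to 3\sigma^4$, and the fourth-moment theorem identifying the contraction norms $\|f_N\otimes_r f_N\|$ as the common control of the fourth cumulant and of $\Var\left(\frac{1}{q}\|DF_N\|_{\cH}^2\right)$ --- this is precisely the Nourdin--Peccati proof.
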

We conclude this section with a result about an upper bound for the total variation of a $q$-th multiple integral and a Gaussian random variable.
\begin{proposition} \cite[Theorem 5.2.6]{NourdinPeccati2012} \label{Th2.2}
Let $q\ge 2$ be an integer, and let $F=\bI_q(f)$ be a multiple integral of order $q$ such that $\E(F^2)=\sigma^2>0$. Then, for $\cn=\cN(0,\sigma^2)$,
\begin{equation*}
d_{TV}(F,\cn)\le \frac{2}{\sigma^2}\sqrt{\Var\left(\frac{1}{q}\|DF \|_{\mathcal{H}}^2 \right)}.
\end{equation*}
\end{proposition}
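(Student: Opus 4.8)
The plan is to couple Stein's method for the Gaussian target $\cn=\cN(0,\sigma^2)$ with the Malliavin integration-by-parts calculus recalled in this appendix. Recall first that, by definition, $d_{TV}(F,\cn)=\sup_{A}\big|\bP(F\in A)-\bP(\cn\in A)\big|$, the supremum ranging over Borel sets $A$, i.e. over test functions of the form $h=\1_A$ with $\|h\|_\infty\le 1$. For each such $h$ I would introduce the Stein equation associated with $\cn$, namely $\sigma^2 f'(x)-x f(x)=h(x)-\E[h(\cn)]$, and invoke the classical estimate that its bounded solution $f_h$ satisfies $\|f_h'\|_\infty\le 2/\sigma^2$. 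Evaluating this identity at $F$ and taking expectations yields $\E[h(F)]-\E[h(\cn)]=\E\big[\sigma^2 f_h'(F)-F f_h(F)\big]$.

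The heart of the argument is the integration-by-parts identity
\begin{equation*}
\E\big[F f_h(F)\big]=\frac{1}{q}\,\E\big[f_h'(F)\,\|DF\|_{\mathcal{H}}^2\big],
\end{equation*}
which I would establish as follows. Since $F=\bI_q(f)$ lies in the $q$-th Wiener chaos, it is an eigenfunction of the operator $\boldsymbol{\delta}D$ with eigenvalue $q$, that is $\boldsymbol{\delta}(DF)=qF$. Applying the duality formula $\E(\boldsymbol{\delta}(v)G)=\E\langle v,DG\rangle_{\mathcal{H}}$ with $v=DF$ and $G=f_h(F)$, and then the chain rule $D\big(f_h(F)\big)=f_h'(F)\,DF$, I obtain
\begin{equation*}
\E\big[F f_h(F)\big]=\frac{1}{q}\E\big[\boldsymbol{\delta}(DF)\,f_h(F)\big]=\frac{1}{q}\E\big\langle DF,\,f_h'(F)\,DF\big\rangle_{\mathcal{H}}=\frac{1}{q}\E\big[f_h'(F)\,\|DF\|_{\mathcal{H}}^2\big].
\end{equation*}
The membership $F\in\mathbb{D}^{1,2}$ --- in fact $F$ is smooth in the Malliavin sense by hypercontractivity of the fixed chaos --- guarantees that the duality and chain rule apply, with $f_h$ first taken smooth and then $h=\1_A$ reached by a standard approximation.

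Combining the two displays gives $\E[h(F)]-\E[h(\cn)]=\E\big[f_h'(F)\big(\sigma^2-\tfrac{1}{q}\|DF\|_{\mathcal{H}}^2\big)\big]$, whence, using $\|f_h'\|_\infty\le 2/\sigma^2$,
\begin{equation*}
\big|\E[h(F)]-\E[h(\cn)]\big|\le\frac{2}{\sigma^2}\,\E\Big|\sigma^2-\tfrac{1}{q}\|DF\|_{\mathcal{H}}^2\Big|.
\end{equation*}
Finally I would observe that this random variable is centered: taking $v=DF$ and $G=F$ in the duality formula gives $\E\big[\tfrac{1}{q}\|DF\|_{\mathcal{H}}^2\big]=\tfrac{1}{q}\E\big[\boldsymbol{\delta}(DF)F\big]=\E[F^2]=\sigma^2$. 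Hence Cauchy--Schwarz (equivalently Jensen) bounds $\E\big|\sigma^2-\tfrac{1}{q}\|DF\|_{\mathcal{H}}^2\big|$ by $\sqrt{\Var\big(\tfrac{1}{q}\|DF\|_{\mathcal{H}}^2\big)}$, and taking the supremum over $A$ delivers the asserted inequality.

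The step I expect to be the main obstacle is twofold and lives at the interface of the two theories: on the analytic side, establishing the sharp Stein solution bound $\|f_h'\|_\infty\le 2/\sigma^2$ for non-smooth indicator test functions (requiring a careful study of the explicit solution together with an approximation argument), and on the algebraic side, the eigenfunction identity $\boldsymbol{\delta}(DF)=qF$, which is precisely the bridge converting the order $q$ of the chaos into the factor $1/q$ appearing in the statement.
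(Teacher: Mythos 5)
Your proof is correct: the paper itself offers no proof of Proposition~\ref{Th2.2}, quoting it directly from \cite[Theorem 5.2.6]{NourdinPeccati2012}, and your argument --- the Stein equation $\sigma^2 f'(x)-xf(x)=h(x)-\E[h(\cn)]$ with the bound $\|f_h'\|_\infty\le 2/\sigma^2$, the eigenfunction identity $\boldsymbol{\delta}(DF)=qF$ combined with duality and the chain rule to get $\E[Ff_h(F)]=\tfrac1q\E[f_h'(F)\|DF\|_{\mathcal H}^2]$, the centering $\E[\tfrac1q\|DF\|_{\mathcal H}^2]=\sigma^2$, and Cauchy--Schwarz --- is exactly the Malliavin--Stein proof given in that source. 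The only cosmetic imprecisions are that $F=\bI_q(f)$ lies in $\mathbb{D}^{1,2}$ simply because multiple Wiener integrals do (hypercontractivity is not needed for this), and that applying the chain rule with the merely Lipschitz $f_h$ uses the absolute continuity of the law of a nonzero fixed-chaos element, which your approximation remark implicitly covers.
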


\bibliographystyle{alpha}
\def\cprime{$'$}

\end{document}